\newtheorem{lemma}{\bf{Lemma} }[section]
\newtheorem{prop}{\bf{Proposition}}[section]
\newtheorem{thm}{\bf{Theorem}}[section]
\newtheorem{rem}{\sc{Remark} }[section]
\newtheorem{dfn}{\sc{Definition} }[section]
\def\build#1_#2{\mathrel{\mathop{\kern 0pt#1}\limits_{#2}}}
\begin{document}

 \title[Explicit higher regularity on a mixed problem]{Explicit higher regularity on a Cauchy problem with  mixed Neumann-power
type boundary conditions}

\author{Luisa Consiglieri}
\address{Luisa Consiglieri, Independent Researcher Professor, European Union}
\urladdr{\href{http://sites.google.com/site/luisaconsiglieri}{http://sites.google.com/site/luisaconsiglieri}}

\begin{abstract}
We investigate the regularity in $L^p$ ($p>2$) of the gradient
of any weak solution of a Cauchy problem with
mixed Neumann-power type boundary conditions.
Under suitable assumptions we
prove the existence of weak solutions that satisfy explicit estimates.
Some considerations on the steady-state regularity are discussed.
\end{abstract}
\keywords{higher regularity, Caccioppoli estimate,
Gehring-Giaquinta-Modica theory,  $L^p$ maximal regularity}

\subjclass[2010]{35K20, 49N60}

\maketitle

\section{Introduction}

In the mathematical literature, the dependence on the data is
commonly hidden on the universal constants.
These constants that are involved in the estimates
are systematically assumed abstract, {\em i.e.}
they may change their numerical value from line to line throughout
the whole study in concern.
Our objective is to find explicit estimates (also known as 
quantitative estimates \cite{BGV}) such that allow
its real and true application to other fields of science.

In the study of the regularity on the initial-boundary value problem for the
second order differential equation in divergence form, at least
three shortcomings appear from the real world applications.
They are namely discontinuous leading coefficient,
nonlinear monotone boundary conditions, and nonsmooth
Lipschitz domain.
One of the approaches in the investigation of regularity is based on 
the difference quotient technique. We refer to \cite{ebm,fm,nw} 
where there are no boundary terms.  
The elliptic regularity in the halfspace can be found in \cite{kass}.
For Neumann-type boundary conditions,
an arbitrary bounded domain is not globally invariant with respect
to translations. The difference quotient technique is only allowed
 by a suitable localization procedure \cite{sav}.
Even the interior regularity requires the 
differentiability of coefficient,
which is not fulfilled by our coefficient.
The realization of the Laplace operator with
generalized nonlinear Robin boundary conditions can be found in \cite{bie}.

Also by the localization method,
the higher regularity of the gradient is obtained via the reverse
H\"older inequality with increasing supports 
(known as Gehring-Giaquinta-Modica theory,
cf. \cite{ark95,ark99,haga,nwff,par} and the references therein).
Here, we adopt this approach to determine explicit estimates for the
Cauchy problem inspired in the nonlinear heat equation
with the Neumann condition on one part of the boundary of the domain,
and the power law condition on the remaining part of the boundary that
includes the radiative effects \cite{lap,druet}.
Also the constants involved in $L^{p,\infty}$-estimate are determined.

Some considerations on the steady-state case are discussed in
Section \ref{sss}.

\section{Maximal parabolic regularity on $X$}

 Let $[0, T] \subset {\mathbb R}$ be the time interval with $ T >0$,
and   $\Omega\subset \mathbb{R}^n$ ($n\geq 2$) be a (bounded)  domain
 of class $C^{1}$. 
The boundary $\partial\Omega$ is decomposed into two 
 disjoint open subsets, namely $\Gamma$ and  $\partial\Omega\setminus
\bar \Gamma$.
 Moreover we set $Q_T=\Omega\times ]0,T[$, and $\Sigma_T=\Gamma\times ]0,T[$.

In the presence of  Lebesgue,
Sobolev, and Bochner spaces,
the functional framework is
\begin{align*}
L^{p,\infty}(Q_T)&=L^\infty(0,T;L^p(\Omega));\\
V_{p,\ell}(\Omega)&=\{v\in W^{1,p}(\Omega):\ 
v|_\Gamma\in L^{\ell}(\Gamma)\};\\
V_{p,\ell}(Q_T)&=\{v\in L^p(0,T;W^{1,p}(\Omega)):\ 
v|_{\Sigma_T}\in L^{\ell}(\Sigma_T)\},
\end{align*}
for $p,\ell >1$. For $\ell\leq p_*$, 
 with $p_*=p(n-1)/(n-p)$ if $n>p$, and any $p_*>p$ if $n=p$,
observe that $ V_{p,\ell}(Q_T)= L^p(0,T;W^{1,p}(\Omega))$
due to 
 the trace embedding
 $W^{1,p}(\Omega)\hookrightarrow L^{p_*}(\Gamma)$.

Let us introduce the definition of a closed operator 
that admits maximal parabolic regularity on a Banach space $X$ 
\cite{arendt,hall}.
\begin{dfn}
We say that  \emph{$B$ admits maximal parabolic regularity on $X$} if
$B$ is a closed (not necessarily linear)
operator in $X$ with dense domain $D(B)$, and for any
 $F\in L^p(0,T;X)$ ($1<p<\infty$) there exists a unique function
 $u\in L^{p}(0,T;D(B))$, such that  $\partial_t u\in L^{p}(0,T;X)$,
 solving the abstract Cauchy problem
\[ 
\mbox{(ACP)}\qquad\left\{\begin{array}{l}
{d\over dt}u(t)+Bu(t)=F(t),\qquad \mbox{a.e. }t\in ]0,T[\\
u(0)=u_0\in (D(B),X)_{1/p,p}=(X,D(B))_{1/p',p}
\end{array}\right.
\]
where $(D(B),X)_{1/p,p}=\{ v(0):\
v\in L^{p}(0,T;D(B)),\ \partial_t v\in L^{p}(0,T;X)\}$ 
represents  the interpolation space 
\cite[Theorem 5.12]{gris}, and $D(B)$ is endowed with the graph norm.
\end{dfn}
Recall that a densely defined closed operator $B$, such that 
there exists a unique solution of (ACP) for all initial values in $D(B)$,
may be not a generator \cite{aren}. 
For every $u_0\in D(B)$, if $B$ is linear then
this abstract Cauchy problem (ACP)
has the mild solution $u\in C([0,T[;H)$ that verifies
the variation of constants formula
\[
u(t)=\exp[-tB]u_0+\int_0^t \exp[-(t-\tau)B]
F(\tau)\mathrm{d\tau},\qquad t\in [0,T[.
\]
Moreover, the fractional powers $B^{1/2}$ and  $B^{-1/2}$ exist and
global strong solutions can be obtained \cite{rank}. 

Here we consider the nonlinear operator 
$B:V_{2,\ell}(\Omega)\rightarrow [V_{2,\ell}(\Omega)]'$ defined by 
\[
\langle Bu,v\rangle:=
\int_\Omega  (\mathsf{A}\nabla u)\cdot\nabla v\mathrm{dx}
+\int_{\Gamma} b(u)u v\mathrm{ds},\quad\forall v\in V_{2,\ell}(\Omega),
\]
with the assumptions on the coefficients
$\mathsf{A}$ and $b$ being
\begin{itemize}
\item
 $\mathsf{A}=[A_{ij}]_{i,j=1,\cdots,n}$ is a
bounded measurable   $(n\times n)$ matrix-valued function such that
\begin{equation}\label{amin}
\exists  a_\#>0,\quad 
A_{ij}(x)\xi_i\xi_j\geq a_\#|\xi|^2,
\quad\mbox{ a.e. }x\in\Omega,\ \forall \xi\in\mathbb{R}^n,
\end{equation}
under the summation convention over repeated indices:
$\mathsf{A}{\bf a}\cdot{\bf b}=A_{ij}a_jb_i={\bf b}^\top  \mathsf{A}{\bf a}$.
\item $b:\Gamma\times\mathbb R\rightarrow\mathbb R$ is
 a Carath\'eodory function,
 that is, it is measurable in $\Gamma$ and continuous in
  $ \mathbb R$. Moreover,  $b$ is monotone with respect with the
second variable, and it verifies
for some $\ell\geq 2$
\begin{eqnarray}
\label{bmin}\exists b_\#>0:&&
b(\cdot,s) \geq b_\#|s|^{\ell-2};\\
\exists\gamma_1\in
L^\infty(\Gamma):&&|b(x,s)|\leq\gamma_1
(x)|s|^{\ell-2},\label{gama1} 
\end{eqnarray}
for all
 $ s,t\in\mathbb R,$ and  a.e. in $\Gamma$.
\end{itemize}
Set
\begin{equation}
  a^\#=\|\mathsf{A}\|_{\infty,\Omega},
\quad b^\#= \|\gamma_1\|_{\infty,\Gamma}.\label{abmax}
\end{equation}

Under the assumptions (\ref{amin})-(\ref{gama1}), $B$
is monotone, hemicontinuous, bounded, and coercive.
The existence and uniqueness of $u$ of (ACP)
is consequence of \cite[Theorem 4.1, p. 120]{show} if
provided by $u_0\in L^2(\Omega)$.
In particular,
 $B$ admits maximal parabolic regularity on $[V_{2,\ell}(\Omega)]'$,
and its
negative $-B$ generates a $C_0$-semigroup 
on  $[V_{2,\ell}(\Omega)]'$. 

We seek for the $L^p$-integrability of the gradient of $u$
that verifies the variational formulation
\begin{eqnarray}
\int_0^T\langle \partial_t u, v\rangle\mathrm{dt}+\int_{Q_T}
( \mathsf{A}\nabla u)\cdot \nabla v\mathrm{dx}
\mathrm{dt}+\int_{\Sigma_T} b(u)uv\mathrm{ds}\mathrm{dt}=\nonumber\\
=\int_{Q_T} {\bf f}\cdot \nabla v\mathrm{dx}
\mathrm{dt}+
\int_{Q_T}fv \mathrm{dx}\mathrm{dt}+
\int_{\Sigma_T} hv\mathrm{ds}\mathrm{dt}\label{wvf}
\end{eqnarray}
for every $v\in V_{2,\ell}(Q_T)$.
The symbol $\langle\cdot,\cdot\rangle$ stands for the duality pairing in
which is meaningful.

Let $p,\ell\geq 2$.
We denote by $\mathcal{W}_{p,\ell}$ the set of functionals
 $F\in [V_{p',\ell}(Q_T)]'$ that are the form defined by
\[
F(v):=\int_{Q_T}{\bf f}\cdot \nabla v\mathrm{dx}\mathrm{dt}+
\int_{Q_T}fv \mathrm{dx}\mathrm{dt}+
\int_{\Sigma_T}hv\mathrm{ds}\mathrm{dt}
,\quad\forall v\in V_{p',\ell}(Q_T),
\]
with ${\bf f}\in L^p(0,T;{\bf L}^p(\Omega))$, $f\in L^p(Q_T)$,
and 
$h\in L^{\ell/(\ell-1)}(\Sigma_T)$.
The identification $L^p(0,T;[V_{p',\ell}(\Omega)]')
\equiv [L^{p'}(0,T;V_{p',\ell}(\Omega))]'$
 is due to the Phillips
Theorem if provided that $V_{p,\ell}(\Omega)$ is reflexive
and $1<p<\infty$ \cite[p. 104]{show}.
 We simply write $ \mathcal{M}_p= \mathcal{M}_{p,2}$, and
$ V_{p}(Q_T)= V_{p,2}(Q_T)$.
 
We state our main result in the following theorem.
\begin{thm}\label{tmain}
Let $\Omega$ be a $C^{1}$ domain,  $T>0$, and the assumptions
 (\ref{amin})-(\ref{gama1}) be fulfilled.
There exists $\delta>0$ such that for any $p\in [2,2+\delta]$
if  ${\bf f}\in {\bf L}^{2+\delta}(Q_T)$, $f\in L^{2+\delta}(Q_T)$, 
$h\in L^{2+\delta}(\Sigma_T)$ and
$u_0\in L^{2+\delta}(\Omega)$,
then there exists a function $u$ in $L^{p,\infty}(Q_T)\cap V_{p,\ell+p-2}(Q_T)$
which is solution of (\ref{wvf}) such that
 \begin{eqnarray}
{\rm ess} \sup_{t\in [0,T]}\| u\|_{p,\Omega}^p(t) \leq
\mathcal{G}(a_\#,b_\#,p)
\exp\left[(p-2+(p-1)\nu_0^{1/(p-1)})T\right];\label{gr1}\\
\|u\|^{\ell+p-2}_{\ell+p-2,\Sigma_T}\leq 
(b_\#)^{-1}\mathcal{E}(a_\#,b_\#,p);\\
\|\nabla u\|_{p,Q_T}\leq \mathcal{M}(a_\#,b_\#)
,\label{cota2}
\end{eqnarray}
with
\begin{eqnarray}
\mathcal{G}(a_\#,b_\#,p)&=& \| u_0\|_{p,\Omega}^p+\frac{1}{\nu_0}
\|f \|_{p,Q_T}^p+ \left({p-1\over a_\#}\right)^{p/2}
\|{\bf f}\|_{p,Q_T}^p+ \nonumber\\&&
+\frac{p(\ell-1)}{(\ell+p-2)
 b_\#^{(p-1)/(\ell-1)}}\int_{\Sigma_T}
|h|^{\frac{\ell+p-2}{\ell-1}} \mathrm{ds}\mathrm{d\tau};
\nonumber \\
\mathcal{E}(a_\#,b_\#,p)&=&
\mathcal{G}(a_\#,b_\#,p)\left(1+(p-2+(p-1)\nu_0^{1/(p-1)})T\times
\right.\nonumber\\&&
\left. \times\exp\left[(
p-2+(p-1)\nu_0^{1/(p-1)})T\right]\right)
;\label{eaa}\\
\mathcal{M}(a_\#,b_\#)&=&C(n)
\left[\sqrt{\frac{\mathcal{E}(a_\#,b_\#,2)}{a_\#}}+
\frac{\left(1 +\upsilon\right)^{1/p}}{a_\#}
 \left(\sqrt{1+a_\#} \|{\bf f}\|_{p,Q_T}
+\right.\right.\nonumber \\ &&\left. \left.
 +
\frac{1}{\sqrt{\nu_0}}\|f\|_{p,Q_T}+
\sqrt{1+a_\#}K_{{2n/( n+1)}}\|h\|_{p,\Sigma_T} \right)\right]
.\label{maa}
\end{eqnarray}
Here,   $\nu_0=\nu_0(f)$ is a positive constant if   $f\not=0$,
and $\nu_0(0)=0$ otherwise; $C(n)$ is according to (\ref{const}), and
 $K_{2n/(n+1)}$ stands for  the continuity constant of the trace embedding
$W^{1,2n/(n+1)}(\Omega)\hookrightarrow L^2(\Gamma)$.
In particular,  $\partial_t u\in \mathcal{W}_{p,\ell}$.
\end{thm}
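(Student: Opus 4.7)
The plan is to split the proof into two parts: an $L^p$ energy estimate obtained by testing (\ref{wvf}) with the power-type function $|u|^{p-2}u$, which handles (\ref{gr1}) and the boundary bound, and a separate Gehring-Giaquinta-Modica argument which upgrades $\nabla u$ from $L^2$ to $L^{2+\delta}$ and produces (\ref{cota2}). Existence itself for $u_0\in L^2(\Omega)$ is already guaranteed by the remark after (\ref{abmax}) via Showalter's theorem, so the real work is deriving the quantitative estimates; the solution in $L^{p,\infty}(Q_T)\cap V_{p,\ell+p-2}(Q_T)$ is then recovered by showing that the $L^2$ solution inherits the higher integrability whenever the data live in $L^{2+\delta}$.

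For the first part, I would take $v=|u|^{p-2}u$ as test function in (\ref{wvf}) (approximating by a truncation if necessary). The time derivative becomes $\frac{1}{p}\frac{d}{dt}\|u\|_{p,\Omega}^p$; the diffusion, by (\ref{amin}), is bounded below by $\frac{4(p-1)}{p^2}a_\#\int_\Omega|\nabla(|u|^{p/2})|^2$; and the boundary term, by (\ref{bmin}), is bounded below by $b_\#\int_\Gamma|u|^{\ell+p-2}$. On the right-hand side I would split each of the three forcing terms by Young's inequality with tailored conjugate exponents: the $\mathbf{f}\cdot\nabla v$ term against the diffusion with constant $(p-1)/a_\#$, the $fv$ term introducing the parameter $\nu_0$, and the $hv$ boundary term against the $b_\#$ bound using the conjugate pair $((\ell+p-2)/(p-1),(\ell+p-2)/(\ell-1))$. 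Integrating in time and applying Gronwall then yields (\ref{gr1}); re-integrating the absorbed diffusion and boundary contributions gives the intermediate bound (\ref{eaa}).

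For the second part, I would apply the reverse-Hölder-with-increasing-support strategy of Gehring, Giaquinta and Modica via the localization technique cited in the Introduction (\cite{sav,haga,nwff,par}). The steps are: (i) cover $Q_T$ by interior parabolic cylinders and by half-cylinders adapted to $\partial\Omega$ through the $C^1$ chart; (ii) on each cylinder prove a Caccioppoli-type estimate using cutoffs in space and time and the energy identity at level $p=2$, with the boundary term $b(u)u$ absorbed thanks to monotonicity and $h$ treated through the trace embedding $W^{1,2n/(n+1)}(\Omega)\hookrightarrow L^2(\Gamma)$, which is where the constant $K_{2n/(n+1)}$ enters; (iii) combine Caccioppoli with a Sobolev-Poincaré inequality to obtain a reverse Hölder inequality for $|\nabla u|^2$; (iv) invoke Gehring's lemma to deduce the existence of $\delta>0$ and $\nabla u\in L^{2+\delta}(Q_T)$, with constants explicitly traceable to $a_\#$, $b_\#$ and $\mathcal{E}(a_\#,b_\#,2)$, giving (\ref{cota2}). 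Finally $\partial_t u\in\mathcal{W}_{p,\ell}$ follows by duality from (\ref{wvf}) once $\nabla u\in L^p$ and $b(u)u\in L^{\ell/(\ell-1)}(\Sigma_T)$ are secured.

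The hardest step will be the boundary Caccioppoli estimate in (ii), because $\Gamma$ and $\partial\Omega\setminus\bar\Gamma$ meet along an $(n-2)$-set on which neither the Neumann nor the power-type condition is globally invariant under translations, so one must localize carefully and control the full nonlinear boundary contribution while keeping every constant explicit; the simultaneous presence of $b(u)u$ and of the forcing $h$ on $\Gamma$ is also the reason the natural boundary integrability exponent in the estimate is $\ell+p-2$ rather than $\ell$. Tracking constants through Gehring's lemma with this mixed boundary structure, rather than settling for an abstract $C$, is the technical heart of the argument.
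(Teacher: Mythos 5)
Your proposal is correct and follows essentially the same route as the paper: the $L^{p,\infty}(Q_T)$ and $L^{\ell+p-2}(\Sigma_T)$ bounds come from testing (\ref{wvf}) with $\chi_{]0,t[}|u|^{p-2}u$, Young, and Gronwall (the paper's Proposition \ref{pinf}), while (\ref{cota2}) is obtained from a Caccioppoli estimate, a Sobolev--Poincar\'e inequality, a reverse-H\"older/Gehring lemma split into interior and boundary cases via $C^1$ charts, and a Besicovitch covering to globalize (Sections \ref{sp}--\ref{sns1p}). The only detail worth flagging is that the Gehring step is applied to $\Phi=|\nabla u|^{2n/(n+2)}$ with exponent $(n+2)/n$ rather than directly to $|\nabla u|^2$, which is the standard device and is what you implicitly describe.
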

If $ b_\#=0$ in (\ref{bmin}),
$B$ is $L^2(\Omega)$-elliptic:
$\langle Bw,w\rangle+ a_\#\|w\|_{2,\Omega}\geq a_\#\|w\|_{1,2,\Omega}
$, but is not coercive on $H^1(\Omega)$.
However, it is possible to reformulate the above theorem such that
similar estimates may be obtained by the 
Gehring-Giaquinta-Modica theory if $\ell\leq 2_*$ is provided, {\em i.e.}
 $ W^{1,2}(\Omega)\hookrightarrow L^{\ell}(\Gamma)$
for any $\ell \geq 2$ if $n=2$, and $\ell\leq 2(n-1)/(n-2)$ if $n>2$
(cf. Remark \ref{rb0}).

\section{$L^{p,\infty}(Q_T)$ and $L^{\ell+p-2}(\Sigma_T)$ estimates}
\label{slpinf}

Local $L^{p,\infty}(Q_T)$-estimates can be obtained under the
Gehring-Giaquinta-Modica technique  as can be found in \cite{ark95}.
Under the Moser technique as already developed
in \cite{lc-arx},
$L^{p,\infty}(Q_T)$ and $L^{\ell+p-2}(\Sigma_T)$ estimates can
appear as consequence of $L^{\infty}(Q_T)$ and $L^{\infty}(\Sigma_T)$
estimates, respectively. Here we provide the explicit estimates
under the direct "apriori" technique
in the following proposition.
\begin{prop}\label{pinf}
Any function  $u$ solving (\ref{wvf}) satisfies, for all $p\geq 2$,
 (\ref{gr1}) and
 \begin{eqnarray}
a_\#\||u|^{(p-2)/2}\nabla u\|_{2,Q_T}^2+
b_\#\int_0^T\int_{\Gamma} |u|^{\ell+p-2}\mathrm{ds}\mathrm{d\tau}
\leq\mathcal{E}(a_\#,b_\#,p),\label{gr2}
\end{eqnarray}
with  $\mathcal{E}(a_\#,b_\#,p)$ being given by (\ref{eaa}).
\end{prop}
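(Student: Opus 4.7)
The plan is the standard apriori energy estimate, testing the weak formulation \eqref{wvf} with $v=|u|^{p-2}u$. Since $u$ is only known to lie in $V_{2,\ell}(Q_T)$, this $v$ is not a priori admissible for $p>2$; I would first work with the truncation $v_M=|u_M|^{p-2}u_M$ where $u_M=\mathrm{sgn}(u)\min(|u|,M)$, which lies in $V_{2,\ell}(Q_T)$, and pass to the limit $M\to\infty$ by monotone convergence once the estimates are in hand. Using $\nabla(|u|^{p-2}u)=(p-1)|u|^{p-2}\nabla u$, the time pairing contributes $\tfrac{1}{p}\tfrac{d}{dt}\|u\|_{p,\Omega}^p$; the coercivity \eqref{amin} bounds the diffusion term below by $(p-1)a_\#\||u|^{(p-2)/2}\nabla u\|_{2,\Omega}^2$; and \eqref{bmin} bounds the boundary term below by $b_\#\|u\|_{\ell+p-2,\Gamma}^{\ell+p-2}$.

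Next I would dispatch the three source integrals by Young's inequality with parameters calibrated to reproduce the constants in $\mathcal{G}$. For $\int_\Omega fv$, Young with conjugate exponents $(p,p/(p-1))$ and parameter $\nu_0^{1/(p-1)}$ yields the data term $\tfrac{1}{p\nu_0}\|f\|_{p,\Omega}^p$ and a residual $\tfrac{p-1}{p}\nu_0^{1/(p-1)}\|u\|_{p,\Omega}^p$. For $(p-1)\int_\Omega{\bf f}\cdot|u|^{p-2}\nabla u$, I would apply a weighted Cauchy splitting to absorb a fraction of the gradient term into the diffusion integral, and then apply Young with exponents $p/2$ and $p/(p-2)$ to the remaining $|{\bf f}|^2|u|^{p-2}$, producing the data term $\tfrac{1}{p}\bigl(\tfrac{p-1}{a_\#}\bigr)^{p/2}\|{\bf f}\|_{p,\Omega}^p$ and a residual proportional to $\|u\|_{p,\Omega}^p$. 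For $\int_\Gamma hv$, Young's inequality with conjugate exponents $(\ell+p-2)/(\ell-1)$ and $(\ell+p-2)/(p-1)$ calibrated against $b_\#$ absorbs a fraction of $b_\#\|u\|_{\ell+p-2,\Gamma}^{\ell+p-2}$ and leaves the $|h|^{(\ell+p-2)/(\ell-1)}$ data term with the coefficient prescribed in $\mathcal{G}$.

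Collecting terms and multiplying through by $p$, I expect a differential inequality of the form
\begin{equation*}
\tfrac{d}{dt}\|u\|_{p,\Omega}^p+c_1 a_\#\||u|^{(p-2)/2}\nabla u\|_{2,\Omega}^2+c_2 b_\#\|u\|_{\ell+p-2,\Gamma}^{\ell+p-2}\leq \bigl(p-2+(p-1)\nu_0^{1/(p-1)}\bigr)\|u\|_{p,\Omega}^p+\mathcal{G}'(t),
\end{equation*}
with $c_1,c_2>0$ absolute constants and $\int_0^T\mathcal{G}'(\tau)\,d\tau=\mathcal{G}(a_\#,b_\#,p)-\|u_0\|_{p,\Omega}^p$. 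Gronwall applied to the scalar ODE (discarding the nonnegative dissipative terms) gives \eqref{gr1}; re-integrating in time and feeding \eqref{gr1} into the linear $\|u\|_{p,\Omega}^p$ term produces \eqref{gr2} with $\mathcal{E}(a_\#,b_\#,p)$ as in \eqref{eaa}. The degenerate case $f=0,\ \nu_0=0$ is handled by simply dropping the $f$-line throughout. I expect the main technical point to be bookkeeping: calibrating the Young parameters simultaneously so that the data coefficients match $\mathcal{G}$ exactly and the residual $\|u\|_{p,\Omega}^p$-coefficient sums to precisely $p-2+(p-1)\nu_0^{1/(p-1)}$; admissibility of the power-type test function via the truncation above is a secondary routine step.
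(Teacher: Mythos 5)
Your proposal follows essentially the same route as the paper: test the variational formulation with (a time-localized version of) $v=|u|^{p-2}u$, use \eqref{amin}--\eqref{bmin} for coercivity, dispatch the three data terms by weighted Young/Cauchy inequalities calibrated so the residual $\|u\|_{p,\Omega}^p$-coefficient is $p-2+(p-1)\nu_0^{1/(p-1)}$, apply Gronwall for \eqref{gr1}, and feed \eqref{gr1} back into the integrated estimate for \eqref{gr2}. Your remark about making the test function admissible via the truncation $v_M=|u_M|^{p-2}u_M$ and passing $M\to\infty$ is a useful refinement the paper leaves implicit, but it does not change the nature of the argument.
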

\begin{proof}
Fix $t\in ]0,T[$ arbitrary, and let $\chi_{]0,t[}\in L^\infty (]0,T[)$
be the characteristic function.
 Taking $v=\chi_{]0,t[}|u|^{p-2}u$ as a test function in (\ref{wvf}),
 applying the H\"older and Young 
inequalities, and using (\ref{amin}) and (\ref{bmin}),  
we obtain
 \begin{eqnarray*}
 {1\over p}\| u\|_{p,\Omega}^p(t)+
a_\#(p-1)\||u|^{(p-2)/2}\nabla u\|_{2,Q_t}^2+b_\#
\int_0^t\int_{\Gamma} |u|^{\ell+p-2}\mathrm{ds}\mathrm{d\tau}
\leq\\ \leq
 {1\over p}\| u_0\|_{p,\Omega}^p+
\int_0^t\|f \|_{p,\Omega}\|u \|_{p,\Omega}^{p-1}\mathrm{d\tau}+
\\
+ {p-1\over 2a_\#}
 \int_0^t\|{\bf f}\|_{p,\Omega}^2\|u\|_{p,\Omega}^{p-2}\mathrm{d\tau}+
 {a_\#(p-1) \over 2}\||u|^{(p-2)/2}\nabla u\|_{2,Q_t}^2+ \\ +
{\ell -1\over (\ell+p-2) b_\#^{(p-1)/(\ell-1)}}\int_0^t\int_{\Gamma}
| h|^{\frac{\ell+p-2}{\ell-1}} \mathrm{ds}\mathrm{d\tau}+
\frac{b_\#(p-1)}{\ell+p-2}
\int_0^t\int_{\Gamma} |u|^{\ell+p-2}\mathrm{ds}\mathrm{d\tau}.
\end{eqnarray*}
Rearranging the terms, we have
 \begin{eqnarray*}
 {1\over p}\| u\|_{p,\Omega}^p(t)+
\frac{a_\#(p-1)}{2}\||u|^{(p-2)/2}\nabla u\|_{2,Q_t}^2+
\frac{b_\#(\ell-1)}{\ell+p-2}
\int_0^t\int_{\Gamma} |u|^{\ell+p-2}\mathrm{ds}\mathrm{d\tau}
\leq\\ \leq
 \frac{1}{p}\left(\| u_0\|_{p,\Omega}^p+\frac{1}{\nu_0}
\|f \|_{p,Q_t}^p\right)+\frac{1}{p}\left(p-2+(p-1)\nu_0^{1/(p-1)}\right)
 \int_0^t\|u \|_{p,\Omega}^{p}\mathrm{d\tau}+
\\
+ \frac{2}{p}\left({p-1\over 2a_\#}\right)^{p/2}
\|{\bf f}\|_{p,Q_t}^p+
{\ell -1\over (\ell+p-2) b_\#^{(p-1)/(\ell-1)}}\int_0^t\int_{\Gamma}
|h|^{\frac{\ell+p-2}{\ell-1}} \mathrm{ds}\mathrm{d\tau}.
\end{eqnarray*}
By Gronwall inequality, we find (\ref{gr1}), and consequently
(\ref{gr2}) holds.
\end{proof}
\begin{rem}\label{rsp}
If $f\in L^p(0,T;L^{pn/(p+n)}(\Omega))$,
 Proposition \ref{pinf} remains valid with
alternative estimates by considering
\begin{eqnarray*}
\int_\Omega f|u|^{p-2}u\mathrm{dx}\leq
\|f \|_{pn/(p+n),\Omega}S_{p'}\left(\||u|^{p-1} \|_{p',\Omega}+(p-1)
\||u|^{p-2}\nabla u \|_{p',\Omega}
\right)\leq\\
\leq S_{p'}\|f \|_{pn/(p+n),\Omega}\left(\|u \|_{p,\Omega}^{p-1}+(p-1)
\|u \|_{p,\Omega}^{(p-2)/2}\||u|^{(p-2)/2}\nabla u \|_{2,\Omega}
\right)\leq\\
\leq \frac{1+(p-1)^p}{p\nu_0}(S_{p'})^p
\|f \|_{pn/(p+n),\Omega}^p+\frac{\nu_0^{1/p}}{2}
\||u|^{(p-2)/2}\nabla u \|_{2,\Omega}^2+\\+
\left(\frac{(p-1)\nu_0^{1/(p-1)}}{p}+\frac{(p-2)\nu_0^{1/(p-2)}}{2p}
\right) \|u \|_{p,\Omega}^{p},
\end{eqnarray*}
where $S_{p'}$ ($p'<2$) is the continuity constant from
the  embedding  $ W^{1,p'}(\Omega)\hookrightarrow  L^{pn/(pn-n-p)}(\Omega)$.
\end{rem}

\begin{rem}\label{rb0}
The estimates (\ref{gr1}) and (\ref{cota2})  under  $ b_\#=0$ read, respectively,
 \begin{eqnarray*}
{\rm ess} \sup_{t\in [0,T]}\| u\|_{p,\Omega}^p(t) \leq
\mathcal{G}(a_\#,p)
\exp\left[(p-1)(1+\nu_0^{1/(p-1)})T\right];\\
\|\nabla u\|_{p,Q_T}\leq C(n)
\left[\sqrt{\frac{\mathcal{G}(a_\#,2)
\left(1+(1+\nu_0)T\exp\left[(1+\nu_0)T\right]\right)
}{a_\#}}+
\right. \nonumber\\ \left.+
\frac{\left(1 +\upsilon\right)^{1/p}}{a_\#}
 \left(\sqrt{1+a_\#} \left(\|{\bf f}\|_{p,Q_T}+
K_{{2n/( n+1)}}\|h\|_{p,\Sigma_T} \right)
 +
\frac{1}{\sqrt{\nu_0}}\|f\|_{p,Q_T}\right)\right],
\end{eqnarray*}
with
\begin{eqnarray*}
\mathcal{G}(a_\#,p)= \| u_0\|_{p,\Omega}^p+\frac{1}{\nu_0}
\|f \|_{p,Q_T}^p+ \left({p-1\over a_\#}\right)^{p/2}
\|{\bf f}\|_{p,Q_T}^p+ \nonumber\\
+(p-1)\left( \left(\frac{p^2}{2a_\#(p-1)}\right)^{1/(p-1)}
+1\right)K_{2n/(n+1)}^{2/(p-1)}|\Omega|^{[(p-1)n]^{-1}}
 \|h\|_{p',\Sigma_T}^{p'}.
\end{eqnarray*}
 To this aim, it is sufficient to consider in the proof of 
Proposition \ref{pinf} 
\begin{eqnarray*}
\int_{\Gamma} hu\mathrm{ds}\leq \|h\|_{p',\Gamma} \||u|^{p/2}\|_{2,\Gamma}
^{2/p}\leq \\ \leq
 \|h\|_{p',\Gamma} K_{2n/(n+1)}^{2/p}|\Omega|^{(pn)^{-1}}
\left(\frac{p}{2}\||u|^{(p-2)/2}\nabla u\|_{2,\Omega}+\|u\|_{p,\Omega}^{p/2}
\right)^{2/p} \leq \\ \leq
\frac{1}{p}\|u\|_{p,\Omega}^{p}+
\frac{a_\#(p-1)}{2p}\||u|^{(p-2)/2}\nabla u\|_{2,\Omega}^2+ \\ +
 \frac{1}{p'}\left( \left(\frac{p^2}{2a_\#(p-1)}\right)^{1/(p-1)}
+1\right)K_{2n/(n+1)}^{2/(p-1)}|\Omega|^{[(p-1)n]^{-1}}
 \|h\|_{p',\Gamma}^{p'}.
\end{eqnarray*}
\end{rem}

\section{Auxiliary results}
\label{sp}

First, let us state a Caccioppoli-type inequality, under
letting $U\in L^2(\mathbb{R})$ be defined either by $U\equiv 0$, or by
\[
U(t)=\left(
\int_\Omega \eta^2 (x)\mathrm{dx}\right)^{-1}
\int_\Omega \eta^2 (x)u(x,t)\mathrm{dx},\quad \mbox{ if }
{\rm supp}(\eta)\cap\Gamma =\emptyset,
\]
where  $\eta\in W^{1,\infty}_0(\mathbb{R}^n)$ satisfies
 $0\leq \eta\leq 1$ in $\mathbb{R}^n$.
\begin{prop}\label{cacc}
Let $\Omega$ be a $C^{1}$ domain, and $T>0$. 
 If there exists $\delta>0$ such that
  ${\bf f}\in {\bf L}^{2+\delta}(Q_T)$, $f\in L^{2+\delta}(Q_T)$, 
$h\in L^{2+\delta}(\Sigma_T)$ and
$u_0\in L^{2+\delta}(\Omega)$, then 
any function $u \in V_{2,\ell}(Q_T)\cap C([0,T];[V_{2,\ell}(\Omega)]')$
 solving (\ref{wvf}) verifies
\begin{eqnarray}
 {{\rm ess}\hspace*{-0.6cm}}
\sup_{t\in]t_0-R^2,t_0+R^2[}
\|\eta( u-U)\|_{2,\Omega}^2(t)+a_\#(1-(\nu_1+\nu_2))
\|\nabla u\|^{2}_{2,Q_r(z_0)}
\leq \nonumber\\
\leq \left( \frac{2(a^\#)^2}{a_\#}+2+\nu_0+
\frac{3\nu_2}{2}\right)\frac{2}{(R-r)^2}
\|\eta(u-U) \|_{2,Q_R(z_0)}^2
+\frac{1}{\nu_0} \| f\|_{2,Q_R(z_0)}^2 +\nonumber\\
+\left(\frac{1}{a_\#\nu_1}+2\right) \|{\bf f}\|_{2,Q_R(z_0)}^2
+2R\frac{(K_{{2n/( n+1)}})^2 }{\nu_2}
\left(\frac{1}{a_\#}+2\right)
\| h\|_{2,\Sigma_R(z_0)}^{2},
\label{ffh}
\end{eqnarray}
for every $z_0=(x_0,t_0)\in\overline\Omega\times [0,T]$, and $0<R<\sqrt{T}$.
 Here  $\nu_0=\nu_0(f)$,  $\nu_1=\nu_1({\bf f})$, and  $\nu_2=\nu_2(h)$
 are positive constants if   $f\not=0$, ${\bf f}\not={\bf 0}$,
 $h\not=0$, respectively,
and $\nu_0(0)=\nu_1({\bf 0})=\nu_2(0)=0$ otherwise; and $u$
(analogously for each function $f$, $\bf f$, $h$, and $U$) should be
understood as
\begin{equation}\label{tild}
\widetilde u(x,t)=\left\{
\begin{array}{lc}
u(x,-t),\quad &-T<t\leq 0 \\
u(x,t),& 0<t<T\\
u(x,2T-t),& T\leq t<2T
\end{array}\right.
.\end{equation}
\end{prop}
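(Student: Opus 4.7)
The plan is to apply the classical Caccioppoli test-function argument to (\ref{wvf}): test with $v=\eta^{2}(u-U)\chi_{]t_{0}-R^{2},s[}$ for $s\in\,]t_{0}-R^{2},t_{0}+R^{2}[$, bound each resulting piece via Young's inequality with parameters $\nu_{0},\nu_{1},\nu_{2}$, and finish by taking $\mathrm{ess}\,\sup$ in $s$. The spatial cutoff $\eta$ is fixed with $\eta\equiv1$ on $B_{r}(x_{0})$, $\eta\equiv0$ outside $B_{R}(x_{0})$, and $|\nabla\eta|\le 2/(R-r)$, which is exactly what lets $\|\eta\nabla u\|_{2,Q_{R}}^{2}$ be replaced by $\|\nabla u\|_{2,Q_{r}}^{2}$ on the left. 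The reflection extension (\ref{tild}) is invoked so that the two-sided cylinder $]t_{0}-R^{2},t_{0}+R^{2}[$ lies in a time interval on which $\tilde u$ satisfies a reflected analogue of (\ref{wvf}) for every $z_{0}\in\overline\Omega\times[0,T]$ with $R<\sqrt T$. The mean-value correction $U$ is chosen so that $\int_{\Omega}\eta^{2}(u-U)\,\mathrm dx=0$: in the interior case this holds by definition of $U$, while in the boundary case $U\equiv0$ and no correction is required because $\partial_{t}U=0$.

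For the parabolic term the mean-zero identity collapses $\int_{t_{0}-R^{2}}^{s}\langle\partial_{t}u,\eta^{2}(u-U)\rangle\,\mathrm dt$ into $\frac{1}{2}\|\eta(u-U)\|_{2,\Omega}^{2}(s)-\frac{1}{2}\|\eta(u-U)\|_{2,\Omega}^{2}(t_{0}-R^{2})$; the initial value is absorbed by averaging the starting time over $]t_{0}-R^{2},t_{0}-r^{2}[$, which is the origin of the $(R-r)^{-2}\|\eta(u-U)\|_{2,Q_{R}}^{2}$ contribution on the right. For the diffusion term I expand $\nabla v=2\eta(u-U)\nabla\eta+\eta^{2}\nabla u$, extract the principal piece $a_{\#}\|\eta\nabla u\|_{2,Q_{R}}^{2}$ via (\ref{amin}), and control the cross term $2\int\eta(u-U)\,\mathsf A\nabla u\cdot\nabla\eta$ by Young with parameter $\nu_{1}$, yielding the $(a^{\#})^{2}/a_{\#}$ weight on the $\|\eta(u-U)\|_{2,Q_{R}}^{2}/(R-r)^{2}$ term. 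Monotonicity (\ref{bmin}) makes $\int_{\Sigma_{R}}b(u)u\cdot\eta^{2}u\ge0$ (since $U=0$ whenever $\mathrm{supp}(\eta)\cap\Gamma\ne\emptyset$), so this boundary integral is discarded. The volume sources are treated by Young with parameter $\nu_{0}$ for $\int fv$ and by a further $\nu_{1}$-split of $\int\mathbf f\cdot\nabla v$ that mirrors the diffusion cross term.

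The principal obstacle is the boundary forcing $\int_{\Sigma_{R}(z_{0})}h\,\eta^{2}u\,\mathrm ds\,\mathrm dt$. My approach is: apply the trace embedding $W^{1,2n/(n+1)}(\Omega)\hookrightarrow L^{2}(\Gamma)$ (with constant $K_{2n/(n+1)}$) to $\eta^{2}u(\cdot,t)$, combined with the H\"older estimate $\|w\|_{2n/(n+1),\Omega\cap B_{R}}\le|\Omega\cap B_{R}|^{1/(2n)}\|w\|_{2,\Omega\cap B_{R}}$ and the geometric bound $|\Omega\cap B_{R}|^{1/n}\lesssim R$; this is the source of the $R$-factor multiplying $K_{2n/(n+1)}^{2}$, while the explicit prefactor $2$ arises from the subsequent Cauchy--Schwarz in time on $\Sigma_{R}$ and Young with $\nu_{2}$. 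The resulting $\|\eta^{2}u\|_{L^{2}(0,T;W^{1,2}(\Omega))}^{2}$ is then distributed between the left-hand side (the $1/a_{\#}$ contribution to $1/a_{\#}+2$, by absorbing the gradient part into $a_{\#}\|\eta\nabla u\|_{2,Q_{R}}^{2}$) and the right-hand side (the $+2$ contribution, as an extra weight on $\|\eta(u-U)\|^{2}/(R-r)^{2}$). Taking $\mathrm{ess}\,\sup$ in $s$ then yields (\ref{ffh}). The most delicate step is this last boundary bookkeeping, together with ensuring that the Young parameters are chosen so that $1-(\nu_{1}+\nu_{2})>0$ in front of $\|\nabla u\|_{2,Q_{r}}^{2}$.
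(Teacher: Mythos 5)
Your plan is essentially the paper's own argument: test (\ref{wvf}) with $\eta^{2}(u-U)$ times a temporal cutoff, use (\ref{amin}) for coercivity, Young with three free parameters $\nu_{0},\nu_{1},\nu_{2}$ for the source terms $f,\mathbf f,h$, drop the $b(u)u$ contribution by monotonicity (with $U\equiv 0$ when $\mathrm{supp}\,\eta$ meets $\Gamma$), handle the boundary forcing via the trace embedding $W^{1,2n/(n+1)}(\Omega)\hookrightarrow L^{2}(\Gamma)$ together with the H\"older/support bound giving the factor $R$, and use the mean-zero/$U\equiv 0$ alternatives to kill $\partial_{t}U$. The only deviation is cosmetic: the paper uses a Lipschitz temporal cutoff $\zeta$ with $|\zeta'|\le(R-r)^{-2}$ (so the term $\int\zeta|\zeta'|\|\eta(u-U)\|_{2,\Omega}^{2}\,dt$ produces the $(R-r)^{-2}\|\eta(u-U)\|_{2,Q_{R}}^{2}$ contribution), whereas you use a sharp indicator in time and then average the starting instant over $]t_{0}-R^{2},t_{0}-r^{2}[$; since $R^{2}-r^{2}\ge(R-r)^{2}$ this yields the same order in $(R-r)$ and is an equivalent standard device, not a different route. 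Minor slips that do not affect validity: the geometry should be the parabolic cubes $Q_{R}(z_{0})$ of (\ref{cube}) rather than balls $B_{R}$, and the paper builds $\eta$ with $|\nabla\eta|\le(R-r)^{-1}$, not $2/(R-r)$ (using cubes, the tighter bound is available); also the trace embedding is applied to $\eta(u-U)$ after a Cauchy--Schwarz split of $\int_{\Gamma}h\eta^{2}(u-U)\le\|\eta h\|_{2,\Gamma}\|\eta(u-U)\|_{2,\Gamma}$, rather than to $\eta^{2}u$ itself.
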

\begin{proof}
Fix  $-T<t_\#<t_1<t_2<t^\#<2T$, and $t\in ]0,T[\cap ]t_\#,t^\#[$.
Let  $\eta\in W^{1,\infty}_0(\mathbb{R}^n)$ satisfy
 $0\leq \eta\leq 1$ in $\mathbb{R}^n$,
and $\zeta\in W^{1,\infty}(\mathbb{R})$ be
  such that  $0\leq \zeta\leq 1$ in $\mathbb{R}$,
$\zeta\equiv 1$ in $]t_1,t_2[$
and $\zeta\equiv 0$ in $\mathbb{R}\setminus [t_\#,t]$.
Since $ W^{1,\infty}_0(\mathbb{R})\hookrightarrow C(\mathbb{R})$
then $\zeta(t_\#)=0$.
If $ t_\#,t_1<0$ 
and/or  $t_2, t^\#>T$, since  $u$ (analogously $f$, $\bf f$, $h$, and $U$)
is only defined on $]0,T[$, then the  extension (\ref{tild}) should be
taken into account.
For the sake of simplicity, we write briefly $u$ instead of $\widetilde u$
(analogously for each function $f$, $\bf f$, $h$, and $U$).

 Taking $v(x,\tau)=\chi_{]-T,t[}(\tau)\zeta^2(\tau)
\eta^2(x)(u(x,\tau)-U(\tau))\in V_{2,\ell}
(Q_T)$ as a test function in (\ref{wvf}),
 making use of (\ref{amin})-(\ref{bmin}) and (\ref{abmax}),  
standard computations yield
  \begin{eqnarray}
\frac{1}{2}\zeta^2\|\eta(u-U)\|_{2,\Omega}^2\Big|^{t}_{t_\#}+
 \frac{a_\#}{2}
\|\zeta\eta \nabla u\|_{2,\Omega\times ]t_\#,t[}^2+b_\#
\int^{t}_{t_\#}\|\zeta\eta (u-U)\|_{\ell,\Gamma}^\ell\mathrm{dt}
\leq\nonumber \\ \leq
\int^{t}_{t_\#}\zeta|\zeta'|\|\eta(u-U)\|_{2,\Omega}^2\mathrm{dt}+
\left( \frac{2(a^\#)^2}{a_\#}+1\right)
\|(u-U)\nabla \eta \|_{2,\Omega\times]t_\#,t[}^2+\nonumber\\+
 \frac{a_\#\nu_1}{2}
\|\zeta\eta \nabla u\|_{2,\Omega\times ]t_\#,t[}^2+
\left(\frac{1}{2\nu_1a_\#}+1\right) \|\eta{\bf f}\|_{2,\Omega\times]t_\#,t[}^2
+\nonumber\\
 +
\int^{t}_{t_\#}\|\zeta\eta f\|_{2,\Omega}\|\zeta\eta(u-U)
 \|_{2,\Omega}\mathrm{dt}
 +
\int^{t}_{t_\#}\|\zeta\eta h\|_{2,\Gamma}
\|\zeta\eta(u-U) \|_{2,\Gamma}\mathrm{dt}. \label{fgh}
\end{eqnarray}

Making use of the trace constant $K_{2n/(n+1)}$ correspondent to the function
$\eta (u-U)\in W^{1,2n/(n+1)}(\Omega)$ with $2n/(n+1)<2\leq n$, 
and after applying the Young inequality, the last boundary integral
in (\ref{fgh}), denoted by $I$, can be computed as
\begin{eqnarray*}
I
\leq 
\int^{t}_{t_\#}\|\zeta\eta h\|_{2,\Gamma}
K_{{2n/( n+1)}}|{\rm supp}(\eta)|^{{1\over 2n}} \zeta \left(
\|\eta\nabla u\|_{2,\Omega} +\right.\\
\left.+ 
\|(u-U)\nabla\eta\|_{2,\Omega }+ \|(u-U)\eta\|_{2,\Omega}\right)
\mathrm{dt}\leq \\
\leq(
K_{{2n/( n+1)}})^2 
|{\rm supp}(\eta)|^{{1\over n}}
\left(\frac{1}{2a_\#\nu_2}+\frac{1}{\nu_2}\right)
\|\eta h\|_{2,\Gamma\times]t_\#,t[}^2 +\\+
 \frac{a_\#\nu_2}{2}
\|\zeta\eta \nabla u\|_{2,\Omega\times ]t_\#,t[}^2
+
\frac{\nu_2}{2}\int^{t}_{t_\#}\zeta^2\left(
\|(u-U)\nabla\eta\|_{2,\Omega }^2+ \|(u-U)\eta\|_{2,\Omega}^2\right)
\mathrm{dt}
.
\end{eqnarray*}
Applying the Young inequality in (\ref{fgh}), and inserting the above
inequality, we deduce
  \begin{eqnarray*}
\frac{1}{2}\ {{\rm ess}\hspace*{-0.1cm}}
\sup_{t\in]t_\#,t^\#[}\zeta^2\|\eta(u-U)\|_{2,\Omega}^2(t)+
 \frac{a_\#}{2}\left(1-(\nu_1+\nu_2)\right)
\|\eta \nabla u\|_{2,\Omega\times ]t_1,t_2[}^2
\leq\nonumber \\ \leq
\int^{t^\#}_{t_\#}\left(|\zeta'|+\frac{\nu_0}{2}+\frac{\nu_2}{2}\right)
\|\eta(u-U)\|_{2,\Omega}^2\mathrm{dt}
+\frac{1}{2\nu_0}
\|\eta f\|_{2,\Omega\times ]t_\#,t^\#[}^2
 +\nonumber \\+
\left( \frac{2(a^\#)^2}{a_\#}+1+\frac{\nu_2}{2}\right)
\|(u-U)\nabla \eta \|_{2,\Omega\times]t_\#,t^\#[}^2+
\left(\frac{1}{2a_\#\nu_1}+1\right) \|\eta{\bf f}\|_{2,\Omega\times]t_\#,t^\#[}^2
 +\nonumber\\
+(K_{{2n/( n+1)}})^2 
|{\rm supp}(\eta)|^{{1\over n}}
\left(\frac{1}{2a_\#\nu_2}+\frac{1}{\nu_2}\right)
\|\eta h\|_{2,\Gamma\times]t_\#,t^\#[}^{2}.
\end{eqnarray*}

 Then, we conclude (\ref{ffh}),
by taking  $\eta\equiv 1$ in $Q_r(x_0)$, $\eta\equiv 0$ in $\mathbb{R}^n
\setminus Q_R(x_0)$, and $|\nabla\eta|\leq (R-r)^{-1}$
a.e. in $Q_R(x_0)\setminus Q_r(x_0)$
 for any $0<r<R$ such that $(R-r)^2\leq 2$; and $|\zeta'|\leq (R-r)^{-2}$
 with $t_\#=t_0-R^2$, $t_1=t_0-r^2$, $t_2=t_0+r^2$, and $t^\#=t_0+R^2$.
\end{proof}
 
Let us recall a result on the Stieltjes integral in the form that
we are going to use (for the general form see \cite{ark99}).
\begin{lemma}\label{hh}
Suppose that $q\in ]0,\infty[$, and $a\in ]1,\infty[$.
If $h,H_i:[1,\infty[\rightarrow [0,\infty[$ are nonincreasing functions such that
\begin{equation}\label{lim0}
\lim_{\iota\rightarrow\infty}h(\iota)=
\lim_{\iota\rightarrow\infty}H_i(\iota)=0,\qquad i=1,\cdots, M_0,
\end{equation}
and that
\begin{equation}\label{hhq}
-\int_\iota^\infty \tau^q \mathrm{dh(\tau)}\leq a[\iota^qh(t)+
\sum_{i=1}^{M_0}H_i^{\beta_i}(\iota)],\quad
\forall \iota\geq 1,
\end{equation}
with $\beta_i\geq 1$,
then, for $\gamma\in [q,aq/(a-1)[$
\begin{eqnarray}\label{hhtese}
-\int_{1}^\infty \iota^\gamma \mathrm{dh(\iota)}\leq {q\over aq-(a-1)\gamma}
\left(-\int_{1}^\infty \iota^q \mathrm{dh(\iota)}\right) +\nonumber\\
+{a\gamma\over aq-(a-1)\gamma}\sum_{i=1}^{M_0}H_i^{\beta_i-1}(1)
\left(-\int_{1}^\infty \iota^{\gamma-q} \mathrm{dH_i(\iota)}\right).
\end{eqnarray}
\end{lemma}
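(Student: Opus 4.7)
The plan is to convert the global Stieltjes integral on the left-hand side of (\ref{hhtese}) into a truncated outer integral whose inner part is controlled by (\ref{hhq}); a Fubini swap then produces a self-referential bound on $I(\gamma):=-\int_1^\infty\iota^\gamma\,dh(\iota)$ that can be solved algebraically once $\gamma<aq/(a-1)$.

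First I would invoke the elementary identity $\iota^{\gamma-q}=1+(\gamma-q)\int_1^\iota s^{\gamma-q-1}\,ds$, valid for $\iota\geq 1$ and $\gamma\geq q$, to write
\[
I(\gamma)=-\int_1^\infty \iota^q\,\iota^{\gamma-q}\,dh(\iota)=I(q)+(\gamma-q)\int_1^\infty s^{\gamma-q-1}\Bigl[-\int_s^\infty \tau^q\,dh(\tau)\Bigr]ds,
\]
where the exchange of integration order is legal by Tonelli (take $d\mu=-dh\geq 0$). Applying the hypothesis (\ref{hhq}) inside the square bracket at the level $\iota=s$ gives
\[
I(\gamma)\leq I(q)+a(\gamma-q)\int_1^\infty s^{\gamma-1}h(s)\,ds+a(\gamma-q)\sum_{i=1}^{M_0}\int_1^\infty s^{\gamma-q-1}H_i^{\beta_i}(s)\,ds.
\]

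Second, I would convert each of these Lebesgue integrals back into Stieltjes form by Fubini. For the $h$-term, $\gamma\int_1^\infty s^{\gamma-1}h(s)\,ds=-\int_1^\infty(\iota^\gamma-1)\,dh(\iota)=I(\gamma)-h(1)$, so the $h$-term recycles $I(\gamma)$ with coefficient $a(\gamma-q)/\gamma$. For the $H_i$-terms I would use $\beta_i\geq 1$ together with monotonicity to bound $H_i^{\beta_i}(s)\leq H_i^{\beta_i-1}(1)H_i(s)$, and then apply the companion Fubini identity $(\gamma-q)\int_1^\infty s^{\gamma-q-1}H_i(s)\,ds=-\int_1^\infty \iota^{\gamma-q}\,dH_i(\iota)-H_i(1)$ to recover the sum on the right-hand side of (\ref{hhtese}).

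Third, collecting everything yields
\[
\Bigl(1-\tfrac{a(\gamma-q)}{\gamma}\Bigr)I(\gamma)\leq I(q)+a\sum_{i=1}^{M_0}H_i^{\beta_i-1}(1)\Bigl(-\int_1^\infty \iota^{\gamma-q}\,dH_i(\iota)\Bigr)-(\text{nonnegative slack}),
\]
and the prefactor equals $[aq-(a-1)\gamma]/\gamma$, which is strictly positive exactly in the admissible range $\gamma\in[q,aq/(a-1))$; dividing gives (\ref{hhtese}) after absorbing the slack terms.

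The main obstacle is the algebraic rearrangement $(1-c)I(\gamma)\leq\cdots$: it is only legitimate once $I(\gamma)$ is a priori finite. I would handle this by first carrying out the above chain with the upper limit $\infty$ replaced by a finite $N$, which makes $I_N(\gamma)<\infty$ automatic and the boundary contributions $N^{\gamma-q}\Phi(N)$ trivially controlled; a monotone-convergence passage $N\to\infty$, using (\ref{lim0}) and the fact that (\ref{hhq}) forces the required decay of $\iota^q h(\iota)$ and $H_i^{\beta_i}(\iota)$, then delivers the stated inequality in the limit.
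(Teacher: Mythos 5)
Your Fubini/integration-by-parts skeleton is the right one, but the final bookkeeping is off: as written your chain does not deliver the stated constant. After you collect terms you obtain
\[
\frac{aq-(a-1)\gamma}{\gamma}\,I(\gamma)\;\leq\; I(q)\;-\;\frac{a(\gamma-q)}{\gamma}h(1)\;+\;a\sum_{i}H_i^{\beta_i-1}(1)\,J_i\;-\;a\sum_i H_i^{\beta_i}(1),
\]
with $J_i:=-\int_1^\infty\iota^{\gamma-q}\,dH_i(\iota)$. If you ``absorb'' (i.e.\ discard) the two nonnegative slack terms as you propose, dividing gives
\[
I(\gamma)\;\leq\;\frac{\gamma}{aq-(a-1)\gamma}\,I(q)\;+\;\frac{a\gamma}{aq-(a-1)\gamma}\sum_i H_i^{\beta_i-1}(1)J_i,
\]
which has $\gamma$, not $q$, in the first coefficient, hence is strictly weaker for $\gamma>q$. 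That this is not merely cosmetic is seen from the extremal case $M_0=0$, $h(\iota)=C\iota^{-aq/(a-1)}$: there (\ref{hhq}) holds with equality, a direct computation gives $I(\gamma)/I(q)=q/[aq-(a-1)\gamma]$, so the stated constant is sharp and cannot be reached by a bound carrying the factor $\gamma$.

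The missing idea is to use (\ref{hhq}) \emph{once more, at $\iota=1$}, to get $I(q)=-\int_1^\infty\tau^q\,dh(\tau)\leq a\,h(1)+a\sum_i H_i^{\beta_i}(1)$, and then to feed this back into the slack terms rather than dropping them. Writing $I(q)=\tfrac{q}{\gamma}I(q)+\tfrac{\gamma-q}{\gamma}I(q)$ and bounding the second piece by $\tfrac{\gamma-q}{\gamma}\bigl(a h(1)+a\sum_i H_i^{\beta_i}(1)\bigr)$, the $h(1)$ contribution cancels exactly against $-\tfrac{a(\gamma-q)}{\gamma}h(1)$, and what remains of the $H_i^{\beta_i}(1)$ contributions combines with $-a\sum_i H_i^{\beta_i}(1)$ into the nonpositive quantity $-\tfrac{aq}{\gamma}\sum_i H_i^{\beta_i}(1)$, which may then be discarded. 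This yields $\tfrac{aq-(a-1)\gamma}{\gamma}I(\gamma)\leq \tfrac{q}{\gamma}I(q)+a\sum_i H_i^{\beta_i-1}(1)J_i$ and hence (\ref{hhtese}). The truncation-at-$N$ device you mention to justify finiteness of $I(\gamma)$ before solving the self-referential inequality is fine and is indeed needed. (For context: the paper does not prove this lemma itself but cites Arkhipova's Gehring-lemma modification, where exactly this extra use of the hypothesis at the left endpoint is what produces the sharp coefficient.)
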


Throughout this section, $z=(x,t)$ stands for spatiotemporal points.
Under  the parabolic metric in $\mathbb{R}^{n+1}$ being given by
\[
d(z^{(1)},z^{(2})=\max_{i=1,\cdots,n}\{|x^{(1)}_i-x^{(2)}_i|,|
t^{(1)}-t^{(2)}|^{1/2}\},
\] 
we use the following standard notation  for
the parabolic parallelepiped
\begin{eqnarray}
Q_R(z)&:=&\{(y,\tau)\in \mathbb{R}^{n+1}:\
d((y,\tau),(x,t))<R\}\nonumber \\
&=&Q_R^{(n)}(x)\times ]t-R^2,t+R^2[,\label{cube}
\end{eqnarray}
where the  spatial cubic interval $Q^{(n)}_R(x)$ stands for the
 cube with edges parallel to coordinate planes
centered at the point $x$ with the radius $R>0$.
When no confusion arises, we shall omit the space dimension and write briefly
$Q_R(x)$.
Furthermore,
we set
\begin{eqnarray*}
Q^+_{R}(z)&:= &\{(y,\tau)\in Q_R(z): \ y_n>x_n\};\\
\Sigma_{R}(z)&:=&\{y\in Q_R(z): \ y_n=x_n\}.
\end{eqnarray*}

Next, we determine an explicit constant involved in
  the reverse H\"older inequality with increasing supports and
an  additional surface integral, where the data exponents improve the ones in 
 \cite{ark99}.
 Observe that in \cite{ark99} the assumed restriction
 $(n-1)/l_1+2/l_2 \geq  (n+2)/s$ is not true for $l_1=l_2=s$. 
 The elliptic version of the below result is stated in \cite{arxC}.
\begin{prop}\label{surf}
Let $R_0>0$, and $z_0=(x_0,t_0)$ with $x_0=(x_0',0)\in\mathbb {R}^n$
and $t_0\geq 0$.
For $p>1$ and $\delta>0$, 
we suppose that the nonnegative functions
$\Phi\in L^p(Q^+_{R_0}(z_0))$, 
$F\in L^{m_1+\delta,m_2+\delta}(Q^+_{R_0}(z_0))$, 
 $G\in L^{l_1+\delta,l_2+\delta}(\Sigma_{R_0}(z_0))$,
 and $\varphi\in L^{1+\delta}(Q^+_{R_0}(x_0) )$
satisfy the estimate
\begin{eqnarray}
\frac 1{R^{n+2}}\int_{Q_{\alpha R}(z)\cap Q^+_{ R_0}(z_0)} \Phi^p\mathrm{dz}
\leq B
\Big(\frac 1{R^{n+2}}\int_{Q_R(z)\cap Q^+_{ R_0}(z_0)} \Phi \mathrm{dz}\Big)^{p}
+\nonumber
\\ +\frac 1{R^{n+2}}\left(\| F\|_{m_1,m_2,Q_R(z)\cap Q^+_{R_0}(z_0)}^{r}
+\| \varphi\|_{1,Q_R(x)\cap Q^+_{R_0}(x_0)}^d\right)+\nonumber\\
+\frac 1{R^{n+1}}\| G\|_{l_1,l_2,Q_R(z)\cap\Sigma_{R_0}(z_0)}^s
, \label{hisums}
\end{eqnarray}
for all $z\in Q_{R_0}(z_0)$,
 and all $R>0$ such that
$Q_R(z)\cap\Sigma_{R_0}(z_0))\not=\emptyset$ 
and $Q_R(z)\subset\subset Q_{R_0}(z_0)$,
with some constants $\alpha\in [1/2,1[$, $B>0$, and
\begin{eqnarray}
\frac{n}{m_1}+\frac{2}{m_2}\geq \frac{n+2}{r},&&
r\geq m_2\geq m_1\geq 1; \label{m12}\\
\frac{n-1}{l_1}+\frac{2}{l_2}\geq \frac{n+1}{s},&&
s\geq l_2\geq l_1\geq 1; \label{l12}\\
nd\geq n+2.&& \label{d12}
\end{eqnarray}
 Then, $\Phi\in  L^{p+\varepsilon}(\omega\cap Q_{R_0}^+(z_0))$,
for all $\varepsilon\in [0,\delta]\cap [0,(p-1)/(\upsilon -1)[$ and
measurable set $\omega\subset \subset Q_{R_0}(z_0)$.
In particular, if $R_0\leq 3/2$, and
${\rm dist}(\omega,\partial Q_{R_0}(z_0))=\beta R_0$ with $\beta\in ]0, 1[$,
 it verifies
\begin{eqnarray}
\| \Phi\|_{p+\varepsilon,\omega\cap Q_{R_0}^+(z_0)}^{p+\varepsilon}
\leq \frac{\beta^{-(n+2) (1+\varepsilon/p)}}
{ p-1-(\upsilon -1)\varepsilon}\left[\frac{p-1}{R_0^{(n+2)\varepsilon/p}}
\left( \| \Phi\|_{p, Q_{R_0}^+(z_0)}^{p+\varepsilon}+
\right.\right. \nonumber \\ \left.
+ \| \Phi\|_{p, Q_{R_0}^+(z_0)}^p
\left(\| F\|_{m_1,m_2,Q_{R_0}^+(z_0)}^{r
\varepsilon /p}+\frac{2R_0}{3}
\|G\|_{l_1,l_2,\Sigma _{R_0}(z_0)}^{s\varepsilon /p} 
+\frac{4R_0^2}{9}\|\varphi\|_{1,Q_{R_0}^+(x_0)}^{d\varepsilon /p}
\right)\right)\nonumber\\
+\upsilon (p-1+\varepsilon)\left(
E_1 \| F\|_{m_1+r\varepsilon/p,m_2+r\varepsilon/p, Q_{R_0}^+(z_0)}^{
m_2+r\varepsilon /p}
\|F\|_{m_1,m_2,Q_{R_0}^+(z_0)}^{r-m_2}+\right.\nonumber \\ +E_2
\| G\|_{l_1+s\varepsilon /p,l_2+s\varepsilon /p,\Sigma_{R_0}(z_0)}^{l_2+
s\varepsilon /p}
\| G\|_{l_1,l_2,\Sigma_{R_0}(z_0)}^{s-l_2}+
\nonumber\\
\left.\left.+3^{nd-(n+2)}(2R_0)^{n+2-nd}
\| \varphi\|_{1+d\varepsilon /p,Q_{R_0}^+(x_0)}^{1+
d\varepsilon /p}
\| \varphi\|_{1,Q_{R_0}^+(x_0)}^{d-1}
\right)\right],
\qquad\label{highup}
\end{eqnarray}
where $E_1$ and $E_2$ are given by (\ref{e1})-(\ref{e2}), respectively, and
\begin{eqnarray}
\upsilon= (4^{n}+1)\left(
 2^{n+2}( 2^{n+2}B)^{1/p}+
\frac{3}{p'}+2^{\left(\frac{n}{m_1}+\frac{1}{m_2}\right){r\over p}}+
2^{\left(\frac{n-1}{l_1}+\frac{1}{l_2}\right){s\over p}}+ 2^{nd/p}+\right.
\nonumber\\ \left.
+2^{2n+3}\right)^p.\label{defau}
\end{eqnarray}
\end{prop}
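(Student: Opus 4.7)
The plan is to deploy the Gehring--Giaquinta--Modica method in parabolic form, reducing the claim to Lemma \ref{hh} by converting the reverse H\"older hypothesis (\ref{hisums}) into the Stieltjes inequality (\ref{hhq}). The quantitative content sits in two places: the constant $\upsilon$ of (\ref{defau}), which will appear as the factor $a$ in Lemma \ref{hh}, and the interpolation constants $E_1, E_2$, which will emerge from H\"older-type estimates on the data. Every step has to be carried out with numerical precision.

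First I introduce the distribution function
\[
h(\iota)=\int_{\{z\in\omega\cap Q^+_{R_0}(z_0):\,\Phi^p(z)>\iota\}}\Phi^p\,\mathrm{dz},
\]
and analogous $H_1, H_2, H_3$ built from $F^r$ on $Q^+_{R_0}(z_0)$, $G^s$ on $\Sigma_{R_0}(z_0)$, and $\varphi^d$ on $Q^+_{R_0}(x_0)$; monotonicity and (\ref{lim0}) are automatic from the integrability assumptions. For each level $\iota\geq 1$, at every point of $\{\Phi^p>\iota\}$ I select a maximal admissible radius $R(z)\leq R_0/3$ for which the $\Phi^p$-average over $Q_{R(z)}(z)\cap Q^+_{R_0}(z_0)$ equals $\iota$, then extract a Besicovitch covering $\{Q_{R_k}(z_k)\}$ with overlap bounded by a dimensional constant (which produces the $4^n+1$ prefactor in (\ref{defau})); on each such cube hypothesis (\ref{hisums}) is directly applicable.

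I then split each tail $\int F^r$, $\int G^s$, $\int\varphi^d$ along the thresholds $\iota^{1/r}$, $\iota^{1/s}$, $\iota^{1/d}$: the sub-threshold pieces fold into the $\iota\,h(\iota)$ term through H\"older in the mixed Lebesgue spaces $L^{m_1,m_2}$ and $L^{l_1,l_2}$, whose scaling conditions (\ref{m12})--(\ref{d12}) are precisely what makes the powers of $R$ cancel, while the super-threshold pieces produce $H_i^{\beta_i}(\iota)$ contributions. Summing over $k$ yields (\ref{hhq}) with $a=\upsilon$, each of the six monomials inside the $p$-th power of (\ref{defau}) tracking one contribution: $(2^{n+2}B)^{1/p}$ from the reverse H\"older constant; $2^{(n/m_1+1/m_2)r/p}$, $2^{((n-1)/l_1+1/l_2)s/p}$, $2^{nd/p}$ from the three data rescalings; $3/p'$ from a Young-type residual; and $2^{2n+3}$ from a boundary/dimension adjustment. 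Lemma \ref{hh} applied with $q=1$ and $\gamma=1+\varepsilon/p$ (admissible precisely when $\varepsilon<(p-1)/(\upsilon-1)$, which makes $aq-(a-1)\gamma=p-1-(\upsilon-1)\varepsilon$ positive) followed by a layer-cake conversion yields (\ref{highup}), with $E_1, E_2$ materializing as the interpolation constants between the $L^{m_1,m_2}$ and $L^{m_1+r\varepsilon/p,m_2+r\varepsilon/p}$ norms of $F$, and analogously for $G$; the factor $\beta^{-(n+2)(1+\varepsilon/p)}$ records the volume ratio between $\omega$ and $Q_{R_0}(z_0)$.

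The principal obstacle is the surface term. Because $\Sigma_{R_0}$ has one fewer spatial dimension, the $G$-integral rescales like $R^{n+1}$ instead of $R^{n+2}$, so a direct Stieltjes bookkeeping loses a factor of $R$. This is compensated by the factor $R$ standing in front of the $G$-term on the right of (\ref{hisums}), combined with the normalization $R_0\leq 3/2$: the mismatch is absorbed into the coefficient $2R_0/3$ appearing in (\ref{highup}). A parallel but weaker scaling gap for the $\varphi$ term (resulting from $nd\geq n+2$ being only an inequality) is absorbed into the coefficient $4R_0^2/9$. Keeping all these numerical factors honest --- both the combinatorics of the covering and the exact constants in the data splittings --- is the genuine labor of the proof.
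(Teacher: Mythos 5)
Your overall strategy — reducing the reverse H\"older hypothesis (\ref{hisums}) to the Stieltjes inequality (\ref{hhq}) via a stopping-time covering, then invoking Lemma \ref{hh} — is the same framework the paper uses, but two of the load-bearing devices are absent and one piece of the bookkeeping is simply wrong.

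First, the arithmetic in your application of Lemma \ref{hh} does not close. You propose $q=1$, $\gamma=1+\varepsilon/p$, $a=\upsilon$, and claim $aq-(a-1)\gamma = p-1-(\upsilon-1)\varepsilon$. In fact $\upsilon\cdot 1 - (\upsilon-1)\bigl(1+\varepsilon/p\bigr) = 1 - (\upsilon-1)\varepsilon/p$, so your range would be $\varepsilon < p/(\upsilon-1)$ and your denominator would be $1-(\upsilon-1)\varepsilon/p$, neither of which matches (\ref{highup}). The paper instead takes $h(\iota)=\int_{Q[\Phi_0>\iota]}\Phi_0$, $q=p-1$ and $\gamma=p+\varepsilon-1$, so that $-\int_\iota^\infty \tau^{p-1}\,dh(\tau)=\int_{\{\Phi_0>\iota\}}\Phi_0^p$ and $\upsilon(p-1)-(\upsilon-1)(p+\varepsilon-1)=p-1-(\upsilon-1)\varepsilon$ comes out exactly. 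Your definition $h(\iota)=\int_{\{\Phi^p>\iota\}}\Phi^p$ with $q=1$ does not reproduce this split of $\Phi^p=\Phi^{p-1}\cdot\Phi$, which is what makes the Stieltjes machinery produce the stated constants.

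Second, you omit the two devices that actually make the covering argument applicable. (i) Hypothesis (\ref{hisums}) holds only for cubes $Q_R(z)\subset\subset Q_{R_0}(z_0)$ that meet the flat face $\Sigma_{R_0}$; near $\Sigma_{R_0}$ these are half-cubes, and neither Calderon--Zygmund nor Besicovitch works directly on a half-space. The paper prolongs $\Phi$, $F$, $\varphi$ evenly across $\Sigma_{R_0}$, turning the half-cube problem into a full-cube problem. Your outline never does this. (ii) The localization $Q_R(z)\subset\subset Q_{R_0}(z_0)$ means the reverse H\"older inequality degenerates as $z$ approaches $\partial Q_{R_0}(z_0)$; the paper handles this by passing to the normalized cube $Q=Q_{3/2}(0)\times\,]-9/4,9/4[$ and replacing $\overline\Phi$ with the distance-weighted $\Phi_0=\overline\Phi\,[\mathrm{dist}(\cdot,\partial Q)]^{(n+2)/p}$, running the level-set argument on $\Phi_0$ on the whole cube, and only at the end undoing the weight (this is also the source of the $\beta^{-(n+2)(1+\varepsilon/p)}$ and $R_0^{-(n+2)\varepsilon/p}$ prefactors in (\ref{highup}), which your explanation attributes vaguely to a volume ratio). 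Without the weighting your covering lemma has nothing to act on near the boundary of $Q_{R_0}(z_0)$. A smaller point: the paper obtains the $4^n$ factor in $\upsilon$ from the Vitali lemma with $\sigma\in\,]3,4[$ applied after a parabolic Calderon--Zygmund subdivision on Whitney shells $C^{(k)}$; Besicovitch would give a different numerical constant, so if you want to reproduce (\ref{defau}) exactly you must follow the paper's covering. As written, your proposal sketches the right shape of argument but is not a proof of (\ref{highup}).
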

\begin{proof}
 We prolong $\Phi$ (analogously $F$)
 and $\Psi$  as even functions with respect to 
 $\Sigma_{R_0}(z_0)$:
\[
\widetilde\Phi(x',x_n)=\Big\{
\begin{array}c
\Phi(x',x_n),\quad x_n>0 \\
\Phi(x',-x_n),\quad x_n<0
\end{array}
\qquad
\widetilde\varphi(x',x_n)=\Big\{
\begin{array}c
\varphi(x',x_n),\quad x_n>0 \\
\varphi(x',-x_n),\quad x_n<0.
\end{array}
\]

Transforming $ Q_{R_0}(z_0)$
 into $Q=Q_{3/2}(0)\times ]-9/4,9/4[$ by the passage to
new coordinates system $(y,\tau)=(3(x-x_0)/(2R_0),9(t-t_0)/(4R_0^2))$,
and setting
\begin{eqnarray*}
M=\frac{3^{n+2}}{(2R_0)^{n+2}}\left(\|\widetilde\Phi\|_{p,Q_{R_0}(z_0)}^p
+\|\widetilde F\|_{m_1,m_2,Q_{R_0}(z_0)}^r+\frac{2R_0}3
\|G\|_{l_1,l_2,\Sigma _{R_0}(z_0)}^s+\right. \\ \left.
+\frac{4R_0^2}{9}\|\widetilde \varphi\|_{1,Q_{R_0}(x_0)}^d\right),
\end{eqnarray*}
 we define  
$\overline\Phi(y,\tau)=M^{-1/p}\widetilde\Phi(x_0+2R_0y/3,t_0+4R_0^2t/9)$,
 $\overline F(y,\tau)=M^{-1/r}\widetilde F(x_0+2R_0y/3,t_0+4R_0^2t/9)$, 
$\overline G(y,\tau)=M^{-1/s}G(x_0+2R_0y/3,t_0+4R_0^2t/9)$, and 
$\overline\varphi(y)=M^{-1/d}\widetilde \varphi(x_0+2R_0y/3)$.

Setting  $\Sigma=\Sigma_{3/2}(0)\times ]-9/4,9/4[$ with
$\Sigma_{3/2}(0)=Q_{3/2}^{(n-1)}(0)\times \{0\}$.
we have
 \[
\max\{\|\overline\Phi\|_{p,Q}^p,\|\overline F\|_{m_1,m_2,Q}^r,
\|\overline G\|_{l_1,l_2,\Sigma}^s,\|\overline \varphi\|_{1,Q_{3/2}^{(n)}(0)}^d
\}\leq 1.
\]
Let us define  $\Phi_0(y,\tau)= 
\overline\Phi(y,\tau)[{\rm dist}((y,\tau),\partial Q)]^{(n+2)/p}$.

In order to apply Lemma \ref{hh}, our objective is to  prove that
\begin{equation}\label{phipsi}
\int_{Q[\Phi_0 >\iota]}\Phi_0^p\mathrm{dy}\mathrm{d\tau}
\leq \upsilon \left(\iota^{p-1}
h(\iota)+H_1^{r/m_2}(\iota)+H_2^{s/l_2}(\iota)+H_3^{d}(\iota)\right),
\end{equation}
for any $\iota\in [1,\infty[$, with  $\upsilon $
being as in (\ref{defau}), and
\begin{eqnarray*}
h(\iota)&=&\int_{Q[\Phi_0 >\iota]}\Phi_0\mathrm{dy}\mathrm{d\tau};\\
H_1(\iota)&=&\int_{-9/4}^{9/4}\left(
\int_{ Q_{3/2}(0)
[ \overline F(\cdot,\tau)>\iota^{p/r}]}\overline F^{m_1}\mathrm{dy}\right)
^{m_2/m_1}\mathrm{d\tau};\\
H_2(\iota)&=&\int_{-9/4}^{9/4}\left(
\int_{ \Sigma_{3/2}(0)[ \overline G
(\cdot,\tau)>\iota^{p/s}]}\overline G^{l_1}\mathrm{ds_y}\right)
^{l_2/l_1}\mathrm{d\tau};\\
H_3(\iota)&=&
\int_{ Q_{3/2}(0)[\overline \varphi>\iota^{p/d}]}\overline 
\varphi\mathrm{dy}.
\end{eqnarray*}

Fix $\iota \geq 1$.
Decomposing $Q=\cup_{k\in\mathbb{N}_0}C^{(k)}=\cup_{k\in\mathbb{N}_0}
\cup_{i=1,\cdots, I}D^{(k)}_i$,
with  $C^{(0)}=Q_{1/2}(0)$, and for each $k\geq 1$,
$C^{(k)}=\{(y,\tau)\in Q:\ 2^{-k}<{\rm dist}((y,\tau)
,\partial Q)\leq 2^{-k+1}\}$, and
 $D^{(k)}_i$ are disjoint cubic intervals 
of size $1/2^{k+2}$ such that 
finitely ($I\in\mathbb{N}$)
decompose each set $C^{(k)}$, $k\in \mathbb{N}_0$,
since 
\[
{1\over |D^{(k)}_i|}\int_{D^{(k)}_i}
\Phi_0^p(y,\tau)\mathrm{dy}\mathrm{d\tau}\leq 2^{3(n+2)},
\]
the parabolic version of the Calderon-Zygmund subdivision argument
 implies that (for details see \cite{ark99,arxC})
if there exists $\lambda>2^{3(n+2)}$ 
then there exists a disjoint sequence of
cubic intervals $Q^{(k)}_{j}=Q_{r_j^{(k)}}(y^{(k,j)},\tau^{(k,j)}
)\subset C^{(k)}$
 such that $r^{(k)}_j< 2^{-(k+3)}$, and
\begin{eqnarray}\label{sum}
\int_{Q[\Phi_0 >\iota\sqrt[p]{\lambda}]}\Phi_0^p\mathrm{dy}
\mathrm{d\tau}\leq 2^{n+2} \iota^p\lambda
\sum_{k\geq 0}\sum_{j\geq 1}|Q^{(k)}_j|;\\
\label{iota}
\iota^p\lambda<
{2^{-(k-1)(n+2)}\over (2r^{(k)}_j)
^{n+2}}\int_{Q^{(k)}_j}\overline\Phi^p\mathrm{dy}
\mathrm{d\tau}.
\end{eqnarray}

Next, in order to estimate the  right hand side in (\ref{sum}),
let us prove, for all $k\geq 0$, and $j\geq 1$, there exists
 $R=R_{kj}\in ]r^{(k)}_j,2r_j^{(k)}]$ that verifies
\begin{eqnarray}\label{y0}
\iota 2^{2n+3}R^{n+2}
< \int_{Q_R[\Phi_0>\iota]}\Phi_0\mathrm{dy}\mathrm{d\tau}+\nonumber\\
+\iota^{-p+1}\left( I_1(R,y^{(k,j)},\tau^{(k,j)})+
I_2(R,y^{(k,j)},\tau^{(k,j)})+I_3(R,y^{(k,j)})\right)
,
\end{eqnarray}
with 
the notation $Q_R= Q_{R}(y^{(k,j)},\tau^{(k,j)})$,
for the points $(y^{(k,j)},\tau^{(k,j)})$ such that $Q_R\cap\Sigma 
$ has positive $(n-1)$-Lebesgue measure, and
\begin{eqnarray*}
I_1(R,x,t)&=&\left(\int_{t-R^2}^{t+R^2}\left(
\int_{\{y\in Q_R(x)
:\ \overline F(y,\tau)>\iota^{p/r}\}}\overline F^{m_1}\mathrm{dy}\right)
^{m_2/m_1}\mathrm{d\tau}\right)^{r/m_2};\\
I_2(R,x,t)&=&\left(\int_{t-R^2}^{t+R^2}\left(
\int_{\{y\in \Sigma_R(x):\ \overline G
(y,\tau)>\iota^{p/s}\}}\overline G^{l_1}\mathrm{ds_y}\right)
^{l_2/l_1}\mathrm{d\tau}\right)^{s/l_2};\\
I_3(R,x)&=&\left(
\int_{\{y\in Q_R(x):\ \overline \varphi(y)>\iota^{p/d}\}}\overline 
\varphi\mathrm{dy}\right)^{d}.
\end{eqnarray*}
Since  $R\leq 2r_j^{(k)}<2^{-(k+1)}$, each
$Q_{R}$ only intersects the sets $C^{(k-1)}$, $C^{(k)}$,
and $C^{(k+1)}$. 
We denote by $\mathcal{T}$ that family 
$\{Q_{R}(y^{(k,j)},\tau^{(k,j)})\}_{k\geq 0,\, j\geq 1}$.

Rewriting (\ref{hisums})  in terms of the new coordinates system,
taking $z=(x_0+2R_0y^{(k,j)}/3,t_0+4R_0^2\tau^{(k,j)}/9)$, and dividing the resultant inequality by $M$, we deduce
\begin{eqnarray*}
\frac{1}{R^{n+2}}
\int_{Q_{\alpha R}}\overline\Phi^p\mathrm{dy}\mathrm{d\tau}\leq
B\left({1\over R^{n+2}}\int_{Q_{R}}
\overline\Phi\mathrm{dy}\mathrm{d\tau}
\right)^p+\nonumber
\\
+\frac{1}{R^{n+2}}
\|\overline F\|_{m_1,m_2,Q_R}^{r} 
+\frac{1}{R^{n+1}}
\|\overline G\|_{l_1,l_2,\Sigma_R}^s
+\frac{1}{R^{n}} 
\|\overline \varphi\|_{1,Q_R^{(n)}}^d,
\end{eqnarray*}
where $ Q_{\alpha R}= Q_{\alpha R}(y^{(k,j)},\tau^{(k,j)})\subset C^{(k)}$,
 $R\in ]r^{(k)}_j, 2r^{(k)}_j]$, and
 $\alpha= r^{(k)}_j/ R \in [1/2,1[$, and taking (\ref{m12})-(\ref{d12})
 and $R_0\leq 3/2$ into account.

Inserting the above inequality into  (\ref{iota}), we obtain
\begin{eqnarray}
(\iota R^{n+2})^p\lambda< 2^{n+2} B 
\left(\int_{Q_{R}}\Phi_0\mathrm{dy}\mathrm{d\tau}
\right)^p+R^{(n+2)(p-1)}\|\overline F\|_{m_1,m_2,Q_R}^{r}+\nonumber\\
+R^{(n+2)(p-1)}\|\overline \varphi\|_{1,Q_R^{(n)}}^d
+R^{(n+2)p-(n+1)} \|\overline G\|_{l_1,l_2,\Sigma_R}^s
.\label{ir}
\end{eqnarray}

Each term of the above right hand side is computed as follows
\begin{eqnarray*}
\int_{Q_{R}}\Phi_0\mathrm{dy}\mathrm{d\tau}\leq 
\int_{Q_{R}[\Phi_0>\iota]}\Phi_0\mathrm{dy}\mathrm{d\tau}+\iota(2R)^{n+2};\\
R^{(n+2)(p-1)/ p}\|\overline F\|_{m_1,m_2,Q_R}^{r/p}\leq\\
\leq
 R^{(n+2)(p-1)/ p} \left( I_1^{1/p} (R,y^{(k,j)},\tau^{(k,j)})+
\iota 2^{\left(\frac{n}{m_1}+\frac{1}{m_2}\right){r\over p}}
 R^{\left(\frac{n}{m_1}+\frac{2}{m_2}\right){r\over p}}\right) \\\leq
\frac{\iota^{-(p-1)}}{p}I_1(R,y^{(k,j)},\tau^{(k,j)})+
\iota R^{n+2}\left(\frac{1}{p'}+2^{(n/m_1+1/m_2)r/p}\right);\\
R^{(n+2)(p-1)/ p}\|\overline \varphi\|_{1,Q_R}^{d/p}\leq
\frac{\iota^{-(p-1)}}{p}I_3(R,y^{(k,j)})+
\iota R^{n+2}\left(\frac{1}{p'}+2^{nd/p}\right);\\
R^{(n+2)-(n+1)/ p}\|\overline G\|_{l_1,l_2,\Sigma_R}^{s/p}\leq
R^{(n+2)(p-1)/ p}\|\overline G\|_{l_1,l_2,\Sigma_R}^{s/p}\leq\\
\leq
\frac{\iota^{-(p-1)}}{p}I_2(R,y^{(k,j)},\tau^{(k,j)})+
\iota R^{n+2}\left(\frac{1}{p'}+2^{((n-1)/l_1+1/l_2)s/p}\right).
\end{eqnarray*}
Defining
\[
\lambda=\left( 2^{n+2}( 2^{n+2}B)^{1/p}+
\frac{3}{p'}+2^{\left(\frac{n}{m_1}+\frac{1}{m_2}\right){r\over p}}+
2^{\left(\frac{n-1}{l_1}+\frac{1}{l_2}\right){s\over p}}+ 2^{nd/p}
+2^{2n+3}\right)^p,
\]
 we gather the above inequalities with (\ref{ir}) obtaining (\ref{y0}).

According to the Vitali covering lemma, there exist
$\sigma\in ]3,4[$ and a sequence of disjoint cubic intervals
$\{Q_{R_i}(y^{(i)},\tau^{(i)})\}_{i\geq 1}$ from the collection
$\mathcal{T}$  such that
\[
\cup_{k\geq 0}\cup_{j\geq 1}Q_R(y^{(k,j)},\tau^{(k,j)})\subset
\cup_{ i\geq 1}Q_{\sigma R_i}(y^{(i)},\tau^{(i)})
\subset Q.\]
Hence,\[
\sum_{k\geq 0}\sum_{j\geq 1}|Q^{(k)}_j|\leq
\sum_{k\geq 0}\sum_{j\geq 1}
|Q_R(y^{(k,j)},\tau^{(k,j)})|\leq\sigma^n \sum_{i\geq 1}|Q_{R_i}(y^{(i)}
,\tau^{(i)})|.
\]
Combining the above with (\ref{sum}), and (\ref{y0}), we find
\[ 
\int_{Q[\Phi_0 >\iota\sqrt[p]{\lambda}]}\Phi_0^p\mathrm{dy}\mathrm{d\tau}\leq
 \lambda\sigma^n \Big(\iota^{p-1}h(\iota ) 
+H_1^{r/m_2}(\iota)+H_2^{s/l_2}(\iota)+H_3^{d}(\iota)\Big),
\]
which implies (\ref{phipsi}), by taking $\upsilon \geq\lambda (\sigma^n +1)$.

We have the relations (for details see \cite{ark99},
if $r\geq m_2$, $s\geq l_2$, and $d\geq 1$)
\begin{eqnarray*}
-\int_1^\infty\iota^{\gamma-p+1}dH_1(\iota)
&\leq&{m_2\over m_1}3^{n(m_2/m_1-1)/(m_1+\delta_1)}
\|\overline F\|_{m_1+\delta_1,m_2+\delta_1,Q}^{m_2+\delta_1};\\-\int_1^\infty\iota^{\gamma-p+1}dH_2(\iota)
&\leq&{l_2\over l_1}3^{(n-1)(l_2/l_1-1)/(l_1+\delta_2)}
\|\overline G\|_{l_1+\delta_2,l_2+\delta_2,\Sigma}^{l_2+\delta_2};\\
-\int_1^\infty\iota^{\gamma-p+1}dH_3(\iota)
&\leq&
\|\overline \varphi\|_{1+\delta_3,Q_{3/2}^{(n)}(0)}^{1+\delta_3},
\end{eqnarray*}
with  $\delta_1=r(\gamma-p+1)/p$,  $\delta_2=s(\gamma-p+1)/p$,
and $\delta_3=d(\gamma-p+1)/p$.

Therefore, Lemma \ref{hh} can be applied, concluding that, for 
$\gamma=p+\varepsilon -1$
such that $p\leq \gamma+1<p+{(p-1)/(\upsilon-1)}$, (\ref{hhtese}) 
implies
\begin{eqnarray*}
\int_{Q[\Phi _0>1]}\Phi_0^{p+\varepsilon }\mathrm{dy}\mathrm{d\tau}\leq
{p-1\over \upsilon(p-1)-(\upsilon-1)\gamma} 
\int_{Q[\Phi_0>1]}\Phi_0^p\mathrm{dy}\mathrm{d\tau}+\\
+{\upsilon\gamma\over \upsilon(p-1)-(\upsilon-1)\gamma}\left(
{m_2\over m_1}3^{n(m_2/m_1-1)\over m_1+r\varepsilon /p}
\|\overline F\|_{m_1+r\varepsilon /p,m_2+r\varepsilon /p,Q}^{m_2+
r\varepsilon /p}H_1^{r/m_2-1}(1)+\right. \\ \left.
+{l_2\over l_1}3^{(n-1)(l_2/l_1-1)\over l_1+s\varepsilon /p}
\|\overline G\|_{l_1+s\varepsilon /p,l_2+s\varepsilon /p,\Sigma}^{l_2+
s\varepsilon /p}H_2^{s/l_2-1}(1)
+
\|\overline \varphi\|_{1+d\varepsilon /p,Q_{3/2}^{(n)}(0)}^{1+
d\varepsilon /p}
H_3^{d-1}(1)\right).
\end{eqnarray*}
On the other hand, since
 $\Phi_0^{p+\varepsilon }\leq\Phi_0^{p}$ a.e. in $Q\setminus Q[\Phi_0>1]$,
 we find for any $\omega\subset\subset Q=Q_{3/2}(0)$
\begin{eqnarray*}
\left[{\rm dist}(\omega,\partial Q)\right]^{(n+2)(1+\varepsilon /p)}
\int_{\omega}\overline\Phi^{p+\varepsilon }\mathrm{dy}\mathrm{d\tau}\leq
{(p-1)3^{n +2}\over (\gamma-\upsilon\varepsilon)2^{n+2}} 
\int_{Q}\overline\Phi^p\mathrm{dy}\mathrm{d\tau}+\\
+{\upsilon\gamma\over \gamma-\upsilon\varepsilon}\left(
{m_2\over m_1}3^{n(m_2-m_1)\over m_1(m_1+r\varepsilon /p)}
\|\overline F\|_{m_1+r\varepsilon /p,m_2+r\varepsilon /p,Q}^{m_2+
r\varepsilon /p}
\|\overline F\|_{m_1,m_2,Q}^{r-m_2}+\right. \\ \left.
+{l_2\over l_1}3^{(n-1)(l_2-l_1)\over l_1( l_1+s\varepsilon /p)}
\|\overline G\|_{l_1+s\varepsilon /p,l_2+s\varepsilon /p,\Sigma}^{l_2+
s\varepsilon /p}
\|\overline G\|_{l_1,l_2,\Sigma}^{s-l_2}
+
\|\overline \varphi\|_{1+d\varepsilon /p,Q_{3/2}^{(n)}(0)}^{1+
d\varepsilon /p}
\|\overline \varphi\|_{1,Q_{3/2}^{(n)}(0)}^{d-1}\right).
\end{eqnarray*}
 Keeping the same designation to the transformed
set $\omega\subset \subset  Q_{R_0}(z_0)$, 
we deduce
\begin{eqnarray*}
\left[{\rm dist}(\omega,\partial Q_{R_0}(z_0))
\frac{3}{2R_0}\right]^{(n+2)(1+\varepsilon /p)}
\int_{\omega}\widetilde\Phi^{p+\varepsilon }\mathrm{dz}\leq
\\ \leq
{(p-1)3^{n +2}\over (p-1-(\upsilon-1)\varepsilon)2^{n+2}} 
M^{\varepsilon/p}\int_{Q_{R_0}(z_0)}\widetilde\Phi^p\mathrm{dz}+\\
+\frac{\upsilon (p-1+\varepsilon)}{ p-1-(\upsilon -1)\varepsilon}
\left(
E_1\|\widetilde F\|_{m_1+r\varepsilon /p,m_2+r\varepsilon /p,Q_{R_0}(z_0)}
^{m_2+r\varepsilon /p}
\|\widetilde F\|_{m_1,m_2,Q_{R_0}(z_0)}^{r-m_2}+\right. \\
+E_2
\| G\|_{l_1+s\varepsilon /p,l_2+s\varepsilon /p,\Sigma_{R_0}(z_0)}^{l_2+
s\varepsilon /p}
\| G\|_{l_1,l_2,\Sigma_{R_0}(z_0)}^{s-l_2}+\\ \left.+
\|\widetilde\varphi\|_{1+d\varepsilon /p,Q_{R_0}(x_0)}^{1+
d\varepsilon /p}
\|\widetilde\varphi\|_{1,Q_{R_0}(x_0)}^{d-1}
3^{nd-(n+2)}(2R_0)^{n+2-nd}\right).
\end{eqnarray*}
with
\begin{eqnarray}
E_1= 
{m_2\over m_1}3^{n(m_2-m_1)\over m_1(m_1+r\varepsilon /p)}
\left(\frac{3}{2R_0}\right)^{n
\left(\frac{m_2+r\varepsilon/p}{m_1+r\varepsilon/p}-
\frac{m_2}{m_1}\right)}\frac{(2R_0)^{
n+2}3^{r(n/m_1+2/m_2)}}{3^{n+2}(2R_0)^{r(n/m_1+2/m_2)}}
; \label{e1} \\
E_2=
{l_2\over l_1}3^{(n-1)(l_2-l_1)\over l_1(l_1+s\varepsilon /p)}
\left(\frac{3}{2R_0}\right)^{(n-1)
\left(\frac{l_2+s\varepsilon/p}{l_1+s\varepsilon/p}-\frac{l_2}{l_1}\right)}
\frac{(2R_0)^{n+2}3^{s((n-1)/l_1+2/l_2)}}{3^{n+2}(2R_0)^{s((n-1)/l_1+2/l_2)}}
.\label{e2}
\end{eqnarray}
Therefore, by applying (\ref{m12})-(\ref{d12})
we conclude (\ref{highup}) which completes the proof.
\end{proof}

In a similar manner that we have Proposition \ref{surf} the following 
Proposition can be obtained.
\begin{prop}\label{ppint}
Under the conditions of Proposition \ref{surf},
if instead of (\ref{hisums}),
\begin{eqnarray}
\frac 1{R^{n+2}}\int_{Q_{\alpha R}(z)} \Phi^p\mathrm{dz}
\leq B_{\rm I}
\left(\frac 1{R^{n+2}}\int_{Q_R(z)} \Phi \mathrm{dz}\right)^{p}
+\frac 1{R^{n+2}}\| F\|_{m_1,m_2,Q_R(z)}^{r}+\nonumber
\\
+\frac 1{R^{n+2}}
\| \varphi\|_{1,Q_R(x)}^d, \label{hiint}
\end{eqnarray}
holds for $R<\min\{\sqrt{T},{\rm dist}(x,\partial\Omega)/\sqrt{n}\}$,
 then $\Phi\in  L^{p+\varepsilon}(Q_{r}(z_0))$,
for all $\varepsilon\in [0,\delta]\cap [0,(p-1)/(\upsilon_{\rm I} -1)[$, with
\begin{equation}\label{defai}
\upsilon_{\rm I}= (4^{n}+1)\left(
 2^{n+2}( 2^{n+2}B_{\rm I})^{1/p}+
\frac{3}{p'}+2^{\left(\frac{n}{m_1}+\frac{1}{m_2}\right){r\over p}}+
 2^{nd/p}
+2^{2n+3}\right)^p,
\end{equation}
 and $r=(1-\beta )R_0$
with $\beta\in ]0,1[$. In particular,  it verifies
\begin{eqnarray}
\| \Phi\|_{p+\varepsilon,Q_r(z_0)}^{p+\varepsilon}
\leq \frac{\beta^{-(n+2)\varepsilon/p}}{p-1-(\upsilon_{\rm I} -1)\varepsilon}
\left[
\frac{p-1}{R_0^{(n+2)\varepsilon/p}}\left(
\| \Phi\|_{p, Q_{R_0}(z_0)}^{p+\varepsilon}+\right.\right.\nonumber\\
+\left. \| \Phi\|_{p, Q_{R_0}(z_0)}^p
\left(\| F\|_{m_1,m_2,Q_{R_0}(z_0)}^{r
\varepsilon /p}
+\frac{4R_0^2}{9}\|\varphi\|_{1,Q_{R_0}(x_0)}^{d\varepsilon /p}
\right)\right)+\nonumber\\
+
\upsilon_{\rm I} (p-1+\varepsilon)\left(
E_1 \| F\|_{m_1+r\varepsilon/p,m_2+r\varepsilon/p, Q_{R_0}(z_0)}^{
m_2+r\varepsilon /p}
\|F\|_{m_1,m_2,Q_{R_0}(z_0)}^{r-m_2}+\right.\nonumber \\ 
\left.\left.+3^{nd-(n+2)}(2R_0)^{n+2-nd}
\| \varphi\|_{1+d\varepsilon /p,Q_{R_0}(x_0)}^{1+d\varepsilon /p}
\| \varphi\|_{1,Q_{R_0}(x_0)}^{d-1}
\right)\right],
\qquad\label{highin}
\end{eqnarray}
where $E_1$ and $E_2$ are given by (\ref{e1})-(\ref{e2}), respectively. 
\end{prop}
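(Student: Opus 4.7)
The plan is to follow the proof of Proposition \ref{surf} essentially verbatim, dropping the ingredients that are specific to the boundary portion $\Sigma_{R_0}(z_0)$. Since (\ref{hiint}) only involves cubes $Q_R(z)\subset\subset Q_{R_0}(z_0)$, the even-reflection step across $\Sigma_{R_0}(z_0)$ is not needed, and the surface-norm contribution $H_2$ coming from $G$ disappears from the entire analysis.

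First I would perform the same parabolic rescaling $(y,\tau)=(3(x-x_0)/(2R_0),9(t-t_0)/(4R_0^2))$ mapping $Q_{R_0}(z_0)$ onto $Q=Q_{3/2}(0)\times ]-9/4,9/4[$, normalizing with
\begin{equation*}
M_{\rm I}=\frac{3^{n+2}}{(2R_0)^{n+2}}\left(\|\Phi\|_{p,Q_{R_0}(z_0)}^p+\|F\|_{m_1,m_2,Q_{R_0}(z_0)}^r+\frac{4R_0^2}{9}\|\varphi\|_{1,Q_{R_0}(x_0)}^d\right),
\end{equation*}
so that the rescaled functions $\overline\Phi,\overline F,\overline\varphi$ satisfy $\max\{\|\overline\Phi\|_{p,Q}^p,\|\overline F\|_{m_1,m_2,Q}^r,\|\overline\varphi\|_{1,Q_{3/2}^{(n)}(0)}^d\}\leq 1$. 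Then I introduce $\Phi_0(y,\tau)=\overline\Phi(y,\tau)[\mathrm{dist}((y,\tau),\partial Q)]^{(n+2)/p}$, exactly as in the proof of Proposition \ref{surf}.

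Next, the parabolic Calder\'on--Zygmund subdivision of $Q$ into layers $C^{(k)}$ and cubic intervals $D_i^{(k)}$ of sidelength $1/2^{k+2}$ produces, whenever $\lambda>2^{3(n+2)}$, disjoint sequences of cubes $Q_j^{(k)}=Q_{r_j^{(k)}}(y^{(k,j)},\tau^{(k,j)})\subset C^{(k)}$ with $r_j^{(k)}<2^{-(k+3)}$ and satisfying the analogs of (\ref{sum})--(\ref{iota}). For each such cube I would produce $R=R_{kj}\in\,]r_j^{(k)},2r_j^{(k)}]$ verifying the interior analog of (\ref{y0}), with only the $I_1$ and $I_3$ contributions, by applying the rescaled version of (\ref{hiint}) at $Q_{\alpha R}(y^{(k,j)},\tau^{(k,j)})$ with $\alpha=r_j^{(k)}/R\in[1/2,1[$ and performing the same Young-inequality splitting of the $F$ and $\varphi$ terms; this pins down $\lambda$ (and hence $\upsilon_{\rm I}$) as in (\ref{defai}). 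Applying the Vitali covering lemma and summing yields
\begin{equation*}
\int_{Q[\Phi_0>\iota]}\Phi_0^p\,\mathrm{dy}\mathrm{d\tau}\leq \upsilon_{\rm I}\left(\iota^{p-1}h(\iota)+H_1^{r/m_2}(\iota)+H_3^{d}(\iota)\right),\quad \iota\geq 1,
\end{equation*}
with $h,H_1,H_3$ defined just as in Proposition \ref{surf} but omitting $H_2$.

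Finally I would invoke Lemma \ref{hh} with $q=p-1$, $a=\upsilon_{\rm I}$, and $\gamma=p+\varepsilon-1$ in the admissible range $p\leq \gamma+1<p+(p-1)/(\upsilon_{\rm I}-1)$ to convert the level-set estimate into $L^{p+\varepsilon}$-integrability of $\Phi_0$ on $Q[\Phi_0>1]$; adding the trivial bound $\Phi_0^{p+\varepsilon}\leq \Phi_0^p$ on $Q[\Phi_0\leq 1]$, and unfolding the rescaling while specializing $\omega=Q_r(z_0)$ with $r=(1-\beta)R_0$ (so that $\mathrm{dist}(\omega,\partial Q_{R_0}(z_0))=\beta R_0$), produces (\ref{highin}) with the constants $E_1,E_2$ inherited from the boundary case but only $E_1$ actually appearing. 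The main technical point to verify is that the admissibility restriction $R<\min\{\sqrt T,\mathrm{dist}(x,\partial\Omega)/\sqrt n\}$ of (\ref{hiint}) is in force at every Calder\'on--Zygmund cube $Q_j^{(k)}$ and its dilate $Q_{R_{kj}}$; this is automatic because the subdivision keeps $Q_{R_{kj}}\subset C^{(k-1)}\cup C^{(k)}\cup C^{(k+1)}\subset Q$, and the choice $r=(1-\beta)R_0$ guarantees that the final estimate localizes in $Q_r(z_0)$ without ever testing (\ref{hiint}) on cubes that touch $\partial Q_{R_0}(z_0)$.
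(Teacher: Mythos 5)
Your proposal is correct and follows exactly the approach the paper intends: the paper itself gives no separate proof of Proposition \ref{ppint}, stating only that it is obtained ``in a similar manner'' to Proposition \ref{surf}, i.e.\ by repeating that argument after dropping the boundary term $G$ (and with it $H_2$, $I_2$, $E_2$, and the even reflection across $\Sigma_{R_0}(z_0)$), which is precisely what you do. One small observation: if one literally re-derives $\lambda$ with only the $F$ and $\varphi$ contributions, the Young-inequality bookkeeping yields $\frac{2}{p'}$ rather than the $\frac{3}{p'}$ that appears in (\ref{defai}); the paper simply keeps the larger (still admissible) constant for uniformity, so this discrepancy is harmless but worth noting if you want the sharpest $\upsilon_{\rm I}$.
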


\begin{rem}\label{ruu}
If $\varphi=0$, then (\ref{defau})-(\ref{defai}) read
\begin{eqnarray}
\upsilon= (4^{n}+1)\left(
 2^{n+2}( 2^{n+2}B)^{\frac{1}{p}}+
\frac{3}{p'}+2^{\left(\frac{n}{m_1}+\frac{1}{m_2}\right){r\over p}}+
2^{\left(\frac{n-1}{l_1}+\frac{1}{l_2}\right){s\over p}}
+2^{2n+3}\right)^p;\label{defuup}\\
\upsilon_{\rm I}= (4^{n}+1)\left(
 2^{n+2}( 2^{n+2}B_{\rm I})^{1/p}+
\frac{3}{p'}+2^{\left(\frac{n}{m_1}+\frac{1}{m_2}\right){r\over p}}
+2^{2n+3}\right)^p.\label{defuin}
\end{eqnarray}
\end{rem}

Finally, we recall a local Poincar\'e inequality, which proof can be found in
\cite{arxC}.
\begin{lemma}\label{lpoin}
For any $x\in\mathbb{R}^n$ and $0<R<\epsilon (2S_{2n/(n+2)})^{-1}$, every 
$u\in H^{1}(Q_R(x))$ verifies
\begin{equation}\label{poiny}
\| u\|_{2,Q_R(x)}\leq {S_{2n/(n+2)}\over
1-\epsilon}\|\nabla u\|_{{2n}/({n+2}),Q_R(x)},
\end{equation}
where $S_{2n/(n+2)}=\pi^{-1/2}n^{(2-3n)/(2n)}(n-2)^{(n-2)/(2n)}[\Gamma(n)/
\Gamma(n/2)]^{1/n}$.
\end{lemma}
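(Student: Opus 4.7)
The statement is a local Poincar\'e--Sobolev inequality that compares $\|u\|_{2,Q_R(x)}$ directly with $\|\nabla u\|_{2n/(n+2),Q_R(x)}$, without any mean-value correction. The structure of the conclusion, namely the factor $S_{2n/(n+2)}/(1-\epsilon)$ combined with the hypothesis $2R\,S_{2n/(n+2)}<\epsilon$, is a strong hint that the proof proceeds by absorption. My plan is to derive first an intermediate bound of the form
\[
\|u\|_{2,Q_R(x)}\leq S_{2n/(n+2)}\|\nabla u\|_{2n/(n+2),Q_R(x)}+2R\,S_{2n/(n+2)}\,\|u\|_{2,Q_R(x)},
\]
and then to move the last term to the left-hand side, using $2R\,S_{2n/(n+2)}<\epsilon$ to produce the factor $(1-\epsilon)^{-1}$.

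The main ingredient is the sharp Gagliardo--Nirenberg--Sobolev inequality for the conjugate pair $(2n/(n+2),2)$ on $\mathbb{R}^n$ with Talenti's optimal constant $S_{2n/(n+2)}$ as given in the statement, namely $\|v\|_{2,\mathbb{R}^n}\leq S_{2n/(n+2)}\|\nabla v\|_{2n/(n+2),\mathbb{R}^n}$ for every $v\in C_c^\infty(\mathbb{R}^n)$. For $u\in H^{1}(Q_R(x))$ I would produce a compactly supported extension $\bar u\in W^{1,2n/(n+2)}(\mathbb{R}^n)$, for instance by reflecting $u$ across $\partial Q_R(x)$ into a slightly larger concentric cube and multiplying by a smooth cutoff equal to one on $Q_R(x)$. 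A careful bookkeeping of this construction gives $\|u\|_{2,Q_R(x)}\leq \|\bar u\|_{2,\mathbb{R}^n}$ and $\|\nabla \bar u\|_{2n/(n+2),\mathbb{R}^n}\leq \|\nabla u\|_{2n/(n+2),Q_R(x)}+\|u\|_{2n/(n+2),Q_R(x)}$, so that the Sobolev inequality applied to $\bar u$ yields $\|u\|_{2,Q_R(x)}\leq S_{2n/(n+2)}(\|\nabla u\|_{2n/(n+2),Q_R(x)}+\|u\|_{2n/(n+2),Q_R(x)})$. Finally, H\"older's inequality with exponents $2n/(n+2)$ and $2$, combined with the volume identity $|Q_R(x)|^{1/n}=2R$ (the cube in the convention of (\ref{cube}) has side length $2R$), gives $\|u\|_{2n/(n+2),Q_R(x)}\leq 2R\,\|u\|_{2,Q_R(x)}$, which inserted into the previous inequality produces the intermediate estimate displayed above and, after absorption, the thesis (\ref{poiny}).

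The main obstacle is the quantitative construction in the extension-and-cutoff step: in order for the coefficient of $\|u\|_{2,Q_R(x)}$ to come out as exactly $2R\,S_{2n/(n+2)}$ (and not some larger multiple of it), the reflection and the cutoff must be arranged so that the correction from differentiating the cutoff contributes only $\|u\|_{2n/(n+2),Q_R(x)}$ with constant $1$. This delicate bookkeeping, together with the use of Talenti's sharp constant, is precisely what makes the bound quantitative, and the details of this construction are those carried out in the author's earlier paper \cite{arxC}, to which the reference in the statement is made.
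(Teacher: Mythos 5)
The paper does not actually prove this lemma; it refers to the author's earlier preprint \cite{arxC} for the proof, so there is no argument in the present document to compare yours against. With that caveat, I want to flag a genuine gap in your proposal.

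Your absorption scheme hinges on the intermediate bound $\|\nabla\bar u\|_{2n/(n+2),\mathbb{R}^n}\leq \|\nabla u\|_{2n/(n+2),Q_R(x)}+\|u\|_{2n/(n+2),Q_R(x)}$, where $\bar u$ is a reflect-then-cutoff extension. This cannot come out of that construction with the constants you need. Any cutoff $\eta$ equal to $1$ on $Q_R(x)$ and vanishing outside a concentric cube $Q_{cR}(x)$ of fixed aspect $c>1$ has $|\nabla\eta|$ of order $1/R$, so the cutoff term $\hat u\,\nabla\eta$ (with $\hat u$ the pre-cutoff reflection) contributes roughly $R^{-1}\|u\|_{2n/(n+2),Q_R(x)}$, not $\|u\|_{2n/(n+2),Q_R(x)}$. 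After your H\"older step $\|u\|_{2n/(n+2),Q_R(x)}\leq 2R\,\|u\|_{2,Q_R(x)}$, the factor $R^{-1}$ from the cutoff cancels the factor $2R$ exactly, leaving an absorption coefficient that is $O(1)$ as $R\to 0$, not $O(R)$. Taking a wider annulus for the cutoff does not rescue this, because $2n/(n+2)<n$ forces the reflection's measure factor to grow faster than the gain in $|\nabla\eta|^{-1}$. So the right-hand side cannot be absorbed for small $R$, and the argument collapses at the very step you flagged as ``delicate bookkeeping.''

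There is in fact a stronger obstruction showing that no bookkeeping can save the lemma exactly as written: take $u\equiv c\neq 0$. Then $\nabla u=0$ while $\|u\|_{2,Q_R(x)}=|c|(2R)^{n/2}>0$, contradicting (\ref{poiny}). Equivalently, substituting $u\equiv c$ into your intermediate inequality forces $1\leq 2R\,S_{2n/(n+2)}$, which is incompatible with the standing hypothesis $2R\,S_{2n/(n+2)}<\epsilon<1$. A correct version of this estimate must therefore carry an additional normalization on $u$ (for instance zero mean over $Q_R(x)$, or vanishing trace on part of $\partial Q_R(x)$), which is not present in the statement and is not supplied in your reconstruction. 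Until that hypothesis is identified and built into both the extension step and the Sobolev step, the argument is not sound.
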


\section{Proof of Theorem \ref{tmain}}
\label{sns1p}
 
Let $u \in V_{2,\ell}(Q_T)\cap C([0,T];[V_{2,\ell}(\Omega)]')$ 
 solve (\ref{wvf}) for all $v\in  V_{2,\ell}(Q_T)$. 
 Let  $0<r<R<\sqrt{T}$, and
 $z_0=(x_0,t_0)\in\overline\Omega\times [0,T]$.
 Proposition \ref{cacc} can be applied.

We split the proof by beginning to show the local interior
and lateral higher integrability of the gradient of $u$.

\subsection{Local interior higher integrability of the gradient}
\label{sint}

If $x_0\in\Omega$, we may take $R<{\rm dist}(x_0,\partial\Omega)/\sqrt{n}$.
Considering $R<\sqrt{T}$, $r=R/2\leq 1$,
 $\nu_1=1/2$, and $\nu_2=0$, (\ref{ffh}) reads
  \begin{eqnarray}
 {{\rm ess}\hspace*{-0.6cm}}
\sup_{t\in ]t_0-R^2,t_0+R^2[}\|\eta(u-U)\|_{2,Q_R(x_0)}^2(t)+
\frac{a_\#}{2}
\| \nabla u\|_{2,Q_r(z_0)}^2
\leq\nonumber \\ \leq
\left( \frac{2(a^\#)^2}{a_\#}+2+\nu_0\right)\frac{2^3}{R^2}
\|\eta (u-U) \|_{2,Q_R(z_0)}^2+\nonumber\\+2
\left(\frac{1}{a_\#}+1\right) \|{\bf f}\|_{2,Q_R(z_0)}^2
 +
\frac{1}{\nu_0}\|f\|_{2,Q_R(z_0)}^2.
\label{ffhint}
\end{eqnarray}
In the presence of Lemma \ref{lpoin} it is sufficient to take $U=0$, and we 
restrict to $R<(4S_{2n/(n+2)})^{-1}$.
Denoting by $Y=2S_{2n/(n+2)}$ the constant in the  inequality
(\ref{poiny}), we integrate
over time to obtain
\begin{eqnarray}
\int_{t_0-R^2}^{t_0+R^2}\|\eta u\|_{2,Q_R(x_0)}\|u\|_{2,Q_R(x_0)}
\mathrm{dt}\leq  Y{{\rm ess}\hspace*{-0.6cm}}
\sup_{t\in ]t_0-R^2,t_0+R^2[}
\|\eta u\|_{2,Q_R(x_0)}\times \nonumber\\
\times (2R^2)^{n-2/( 2n)} \|\nabla u\|_{2n/(n+2),Q_R(z_0)}.\label{uuy}
 \end{eqnarray}
Inserting the above inequality into (\ref{ffhint}),
and after applying the Young inequality, we deduce  
\begin{eqnarray*}
\frac{a_\#}{2} \| \nabla u\|_{2,Q_{R/2}(z_0)}^2
\leq \left( \frac{2(a^\#)^2}{a_\#}+2+\nu_0\right)^2
\frac{2^{5-2/n}
Y^2}{R^{2(n+2)/n}}
\|\nabla u\|_{2n/(n+2),Q_R(z_0)} ^2
+\nonumber \\ + 2\left(\frac{1}{a_\#}+1\right) \|{\bf f}\|_{2,Q_R(z_0)}^2
 +
\frac{1}{\nu_0}\|f\|_{2,Q_R(z_0)}^2
.
\end{eqnarray*}

Employing Proposition \ref{ppint}
with $\Phi=|\nabla u|^{2n/(n+2)}$, $p=(n+2)/n$, $m_1=m_2=r=2$,
and
\begin{eqnarray}\label{defbb}
B_{\rm I} = {2^{2(2-1/n)}\over a_\#}
\left( {2(a^\#)^2\over a_\#}+2+\nu_0
 \right)(4S_{2n/(n+2)})^2;\\
F=\left({4({1/ a_\#}+ 1)
|{\bf f}|^{2}+2|f|^{2}/\nu_0\over a_\#}
\right)^{1/2} \in L^{2+\delta}(Q_R(z_0)),\label{defbf}
\end{eqnarray}
 the interior estimate 
  \begin{eqnarray}
\| \nabla u\|_{2+\varepsilon,Q_{(1-\beta)R}(z_0)}
\leq\left(\frac{2n\beta^{-\varepsilon (n+2)/2}}{4-(n+2)(\upsilon_{\rm I} -1)
\varepsilon }\right)^{1/(2+\varepsilon)}\times 
\nonumber\\ \times 
\left[
\left(\frac{2(4+\varepsilon)}{n(2+\varepsilon)
R^{\varepsilon (n+2)/2}}\right)^{1/(2+\varepsilon)}
\| \nabla u\|_{2,Q_{R}(z_0)}+ \right.\nonumber\\ +
 \left(\frac{2^{1+\varepsilon (n+1)/2}
\varepsilon}{n(2+\varepsilon)}+
\upsilon_{\rm I} \left(\frac{4+\varepsilon(n+2)}{2n}\right)
\right)^{1/(2+\varepsilon)}
\times\nonumber \\ \left. \times
\left(\frac{2\sqrt{(1+a_\#)}}{a_\#}
\|{\bf f}\|_{2+\varepsilon,Q_R(z_0)}+
\sqrt{\frac{2}{a_\#\nu_0}}
\|f\|_{2+\varepsilon,Q_R(z_0)}
\right)\right]
\label{ffhi}
\end{eqnarray}
 holds, 
 for any 
$R<\min\{\sqrt{T},{\rm dist}(x_0,\partial\Omega)/\sqrt{n}, 
(4S_{2n/(n+2)})^{-1}\}$, and for all
 $\varepsilon\in [0,\delta]\cap [0,4/[(n+2)(\upsilon_{\rm I} -1)][$ with
 $\upsilon_{\rm I} $ being defined by (\ref{defuin}), {\em i.e.}
\[
\upsilon_{\rm I} =(4^n+1)\left( 2^{2(n+1)}B_{\rm I}^{n/(n+2)}+
\frac{6}{n+2}+2^{n(n+1)/(n+2)}+2^{2n+3}\right)^{n/(n+2)}.
\]

\begin{rem} 
The constant $B_{\rm I}$ defined in (\ref{defbb}) may be differently given.
For instance, it may depend on  the Poincar\'e constant, denoted by
 $C_{\Omega,p}$,
if we use in (\ref{uuy}) the Minkowski, Sobolev, and Poincar\'e inequalities
to successively  compute 
\begin{eqnarray*}
\|\eta(u-U)\|_{2,Q_R(x_0)} \leq  2
\|u-{-\hspace*{-0.4cm}}\int_{Q_R(x_0)}u\mathrm{dx}\|_{2,Q_R(x_0)}\\
\leq  2S_{2n/(n+2)}\left(\|\nabla u\|_{2n/(n+2),Q_R(x_0)} +
\| u-{-\hspace*{-0.4cm}}\int_{Q_R(x_0)}u\mathrm{dx}\|_{2n/(n+2),Q_R(x_0)} \right)
\\
\leq  2S_{2n/(n+2)}\left(1+C_{Q_R(x_0),2n/(n+2)}
\right)\|\nabla u\|_{2n/(n+2),Q_R(x_0)} ,
\end{eqnarray*}
since $u\in W^{1,2n/(n+2)}(Q_R(x_0))$ with $2n/(n+2)<2\leq n$.
 With this approach, the restriction of $R<(4S_{2n/(n+2)})^{-1}$ can be removed.
\end{rem}

\subsection{Local higher 
integrability up to the spatial boundary of the gradient}
\label{scup}

For reader's convenience, we recall the definition of  $C^1$ domain.
We use the notation $y'=(y_1,\ldots, y_{n-1})\in \mathbb R^{n-1}$.
\begin{dfn}\label{cka}
We say that 
$\Omega$ is a domain of class $C^{1}$ (or simply  $C^1$ domain),
 if
$\Omega$ is an open, bounded, connected, nonempty set 
 of $\mathbb R^n$ and it verifies the following:
\[ 
\exists M\in\mathbb N\quad
\exists  \varrho, \nu>0:\qquad\partial\Omega=\cup^M_{m=1}\Gamma_m,
\] 
with 
\begin{enumerate} 
\item 
$\Gamma_m=O^{-1}_m(\{y=(y',y_{n})\in
Q^{(n-1)}_\varrho(0)\times\mathbb{R}:$ $y_{n}=\varpi_m(y')\}$,
\item
$O^{-1}_m(\{y=(y',y_{n})\in Q^{(n-1)}_\varrho(0)\times\mathbb{R}: $
$\varpi_m(y')<y_{n}<\varpi_m(y')+\nu\}) \subset\Omega,$
\item
$O^{-1}_m(\{y=(y',y_{n})\in Q^{(n-1)}_\varrho(0)\times\mathbb{R}:$
$\varpi_m(y')-\nu<y_{n}<\varpi_m(y')\}) \subset \mathbb R^n\setminus \Omega,$
\end{enumerate} 
where
\[
Q^{(n-1)}_\varrho(0)=\{y'=(y_{1},\cdots,y_{n-1})\in
\mathbb{R}^{n-1}:\ |y_{i}|<\varrho,\ i=1,\cdots,n-1\},
\]
and for each  $m=1,\cdots,M$, $ O_m:\mathbb R^n\rightarrow \mathbb R^n$ 
denotes a local coordinate system:
$$y^{(m)}=O_m(x)= \mathsf{O} x+b,\quad 
 \mathsf{O}^{-1}= \mathsf{O}^T,\ \det \mathsf{O}=1;$$
and $ \varpi_m\in C^{1}(Q^{(n-1)}_\varrho(0))$.
\end{dfn}
 
By Definition \ref{cka}, 
there exist $M \in\mathbb{N}$ and $\varrho,\nu>0$ such that
for any $x_0\in\partial\Omega$ there is  $ m\in\{1,\cdots,M\}$ such that a
local coordinate system $y^{(m)}=O_m(x)$
and a local $C^{1}$-mapping $\varpi_m$ verify 
\begin{equation}\label{gm}
x_0\in \Gamma_m=O_m^{-1}\circ \phi_m^{-1}\left(
Q_\varrho^{(n-1)}(0)\times\{0\}\right),
\end{equation}
where 
$\phi_{m}: Q_\varrho^{(n-1)}(0)\times \mathbb{R}
\rightarrow \mathbb R^n$ of class $C^{1}$ is defined by
\begin{equation}
\phi_{m}(y)=
\left(
\begin{array}c
y' \\
y_n-\varpi_{m}(y')
\end{array}
\right).
\label{phi}
\end{equation}
For each  $ m\in\{1,\cdots,M\}$,
we consider  the change of variables
\begin{equation}\label{chg}
y\in Q_\varrho^{(n-1)}(0)\times ]-\nu,\nu [
\quad\mapsto x=O^{-1}(\phi_m^{-1}(y)).
\end{equation}
Since the Jacobian of  the transformation $O^{-1}_m\circ
\phi^{-1}_m$ is equal to 1, let us denote by the same letter any function
$f=f\circ O^{-1}_m\circ\phi^{-1}_m$.

Fix  $x_0\in\partial\Omega$, and
$ m\in\{1,\cdots,M\}$ such that $x_0\in\Gamma_m$ is
in accordance with (\ref{gm}),
set $y_0=\phi_m\circ O_m(x_0)$, and 
\[
\Sigma_R(y_0)=\{y\in  Q_\varrho^{(n-1)}(0)\times ]-\nu,\nu [:\
 |y'-y_0'|<R, \ y_n=0\}, 
\]
for any $0<R\leq  R_0=\min\{\varrho,\nu,{\rm dist}(y_0',\partial'
 Q_\varrho^{(n-1)}(0))\}$.
 Notice that $y_0=(y_0',0)$.
Inasmuch as $\Gamma_0=O_m^{-1}\circ \phi_m^{-1}\left(
\Sigma_{R_0}(y_0)\right)$, different cases occur, namely
 $\Gamma_0\cap\Gamma\not=\emptyset$
and  $\Gamma_0\cap (\partial\Omega\setminus\overline\Gamma)\not=\emptyset$;
 $\Gamma_0\subset\Gamma$, and 
 $\Gamma_0\subset\partial\Omega\setminus\overline\Gamma$.
 Throughout the sequel, we refer to
 $\|\cdot\|_{2,\Sigma_R(y_0)}$ including cases where the set is empty.

 Reorganizing the terms in (\ref{ffh}) with $\nu_2=1/4$
as in Section \ref{sint}, we have
 \begin{eqnarray*}
\|\nabla u\|_{2,Q_{r}^+(z_0)}^2
\leq { B \over R^{2(n+2)/n}} \|\nabla u\|_{2n/(n+2),Q_R^+(z_0)}^2
+ {4\over  a_\#}\left[
2\left( {1\over a_\#}+ 1\right)\|{\bf f}\|_{2,Q_R^+(z_0)}^2
+\right.\\ \left.
+{1\over \nu_0}\|f\|_{2,Q_R^+(z_0)}^2
+8R \left(\frac{1}{a_\#}+2\right) (K_{{2n/( n+1)}})^2 
\|h\|_{2,\Sigma_R(z_0)}^{2}\right].
\end{eqnarray*}
Here $B$ is defined by (compare to (\ref{defbb}))
\[
B= {2^{5-2/n}\over a_\#}
\left( {2(a^\#)^2\over a_\#}+\frac{19}{8}+\nu_0 \right)(4S_{2n/(n+2)})^2.
 \]
 
  Proposition \ref{surf} with  $\Phi=|\nabla u|^{2n/(n+2)}$, $p=(n+2)/n$,
$l_1=l_2=s=2$, and $F$ being defined by,
instead of (\ref{defbf}),
\[
F=2\left({2({1/ a_\#}+ 1)
|{\bf f}|^{2}+|f|^{2}/\nu_0\over a_\#}
\right)^{1/2} \in L^{2+\delta}(Q_R(z_0)),
\]
and 
\[G=
\left[\frac{32}{a_\#}
\left(\frac{1}{a_\#}+2\right) \right]^{1/2}K_{{2n/( n+1)}}|h|
 \in L^{2+\delta}(\Sigma_R(z_0)),
\]
 and the application of the passage to the initial coordinates system 
  upon choosing the neighborhood  $Q_0=O_m^{-1}\circ \phi_m^{-1}\left(Q_{R_0}(y_0)\right)$ of the subset $\Gamma_0$ of the boundary
$\partial\Omega$,
 imply that
\begin{eqnarray}
\| \nabla u\|_{2+\varepsilon,Q_{(1-\beta)R}(z_0)}
\leq\left(\frac{2n\beta^{-\varepsilon (n+2)/2}}{4-(n+2)(\upsilon-1)
\varepsilon }\right)^{1/(2+\varepsilon)}
\times \nonumber\\ \times 
\left[
\left(\frac{2(4+\varepsilon)}{n(2+\varepsilon)
R^{\varepsilon (n+2)/2}}\right)^{1/(2+\varepsilon)}
\| \nabla u\|_{2,Q_{R}(z_0)}+ \right.\nonumber\\ +
 \left(\frac{2^{1+\varepsilon (n+1)/2}
\varepsilon}{n(2+\varepsilon)}+
\upsilon \left(\frac{4+\varepsilon(n+2)}{2n}\right)
\right)^{1/(2+\varepsilon)}
\times\nonumber \\  \times
\left(\frac{2\sqrt{2(1+a_\#)}}{a_\#}
\|{\bf f}\|_{2+\varepsilon,Q_R(z_0)}+\frac{2}{\sqrt{a_\#\nu_0}}
\|f\|_{2+\varepsilon,Q_R(z_0)}
\right)+ \nonumber \\ 
\left.+\left(\frac{2^{1+\varepsilon n/2}\varepsilon}{n(2+\varepsilon)
R^{\varepsilon /2}}+
\upsilon \left(\frac{4+\varepsilon(n+2)}{2n}\right)
\right)^{1\over 2+\varepsilon}
\frac{4}{a_\#}\sqrt{2(1+a_\#)}
K_{{2n\over n+1}}\|h\|_{2
+\varepsilon,\Sigma_R(z_0)}\right],
\label{ffhup}
\end{eqnarray}
for any 
$R<\min\{\sqrt{T}, R_0, (4S_{2n/(n+2)})^{-1}\}$, and
 for all $\varepsilon\in [0,\delta]\cap [0,4/[(n+2)(\upsilon -1)][$ with
 $\upsilon$ being defined by (\ref{defuup})
that is
\begin{equation}\label{upsi}
\upsilon =(4^n+1)\left( 2^{2(n+1)}B_{\rm I}^{n/(n+2)}+
\frac{6}{n+2}+2^{\frac{n(n+1)}{n+2}}
+2^{\frac{1}{n+2}}+2^{2n+3}\right)^{\frac{n}{n+2}}.
\end{equation}

\subsection{Global higher integrability}
\label{scm}

On the one hand, Section \ref{sint} ensures that
for  each point $z\in \Omega\times [0,T]$
it is associated a sequence of cubic intervals $Q_{r(z)/2}(z)$,
with side lengths $r(z)>0$  tending to zero, such that
(\ref{ffhi}) is verified.
On the other hand, Section \ref{scup} ensures that
for  each point $z\in\partial\Omega\times [0,T]$
it is associated a sequence of cubic intervals $Q_{r(z)/2}(z)$,
with side lengths  $r(z)>0$  tending to zero, such that
(\ref{ffhup}) is verified.

From the mathematical point of view, it is indifferent to continue the proof by considering thoses cubic intervals. 
With in mind the view point of real and numerical
applications we prefer to proceed by analysing separately the spatial  domain.

According to the Besicovitch covering theorem
 \cite[Theorem 1.2]{guz}, there exists a sequence of spatial cubic intervals
$\{Q_{r_m/2}(x^{(m)})\}_{m\geq 1}$ from the above collection of
 cubic intervals
 such that: $\overline{\Omega}\subset \cup_{m\geq 1}Q_{r_m/2}(x^{(m)})$; and
every point of $\mathbb{R}^{n}$ belongs to at most $2^{n}+1$ cubes in 
$\{Q_{r_m/2}(x^{(m)})\}_{m\geq 1}$.
Since $\Omega$
 is bounded, this cover is finite, {\em i.e.}
 its cardinal is an integer number $M$.
 Let us define 
\[
r_\#=\min\{r_m:\ m=1,\cdots,M\}.\]

Indeed,  there exists  $N$ (depending on the dimension of the space)
families of pairwise disjoint cubes such that
(for details see \cite{arxC,guz})
 \[
\{Q_{r_m}(x^{(m)})\}_{m=1,\cdots, M}=\cup_{m=1}^N
\{Q_{r_i}(x^{(i)})\}_{i=1,\cdots, \mathcal{I}(m)\cup \mathcal{J}(m)},
\]
where $\mathcal{I}(m)$ contains the indices with $x^{(i)}\in\Omega$,
while $\mathcal{J}(m)$ contains the indices with $x^{(i)}\in\partial\Omega$.
For each $i\in \mathcal{I}(m)$ (analogously for $i\in \mathcal{J}(m)$)
there exists $d=d_i>0$ such that $dr_i^2/4=T$.
If $d<1$, we take $t^{(i)}=0$ observing that $]0,T[\subset ]0,r_i^2/4[$.
If the integer part $\lfloor d\rfloor$
 is even, {\em i.e.} $\lfloor d\rfloor=2k$, $k\in\mathbb{N}$,
then we may build $k+1$ parabolic interval cubes 
$Q_{r_i}(z^{(i,j)})$ centered at $z^{(i,j)}=(x^{(i)},t^{(j)})$
where $t^{(j)}=2(j-1)r_i^2/4$ for $j=1,\cdots, k+1:=\mathcal{K}(i)$,
observing that $t^{(m+1)}=2mr_i^2/4<T$.
If  $\lfloor d\rfloor$ is odd, {\em i.e.}
 $\lfloor d\rfloor=2k-1$, $k\in\mathbb{N}$,
then we may build $k$ parabolic interval cubes 
$Q_{r_i}(z^{(i,j)})$ centered at $z^{(i,j)}=(x^{(i)},t^{(j)})$
where $t^{(j)}=(2j-1)r_i^2/4$ for $j=1,\cdots, k:=\mathcal{K}(i)$.

Hence, combining (\ref{ffhi}) and (\ref{ffhup}) with
\[
\|\nabla u\|_{p,Q_T}\leq \sum_{m=1}^N\left(\sum_{i\in\mathcal{I}(m)
\atop j=1,\cdots,\mathcal{K}(i)}
\|\nabla u\|_{p,Q_{r_i/2}(z^{(i,j)})}+\sum_{i\in\mathcal{J}(m)
\atop j=1,\cdots,\mathcal{K}(i)}
\|\nabla u\|_{p,Q_{r_i/2}(z^{(i,j)})\cap\Omega} \right),
\]
 we find the no optimal, but simplified, estimate
\begin{eqnarray*}
\|\nabla u\|_{p,Q_T}\leq C(n)
\left[ \|\nabla u\|_{2,Q_T}+ 
\frac{\left(1 +\upsilon \right)^{1/(2+\varepsilon)}}{a_\#}
\times\right. \\ \left. \times
 \left(\sqrt{1+a_\#}\|{\bf f}\|_{2+\varepsilon,Q_T}+ 
\frac{1}{\sqrt{\nu_0}}\|f\|_{2+\varepsilon,Q_T}
+ \sqrt{1+a_\#}K_{{2n/( n+1)}}\|h\|_{2+\varepsilon,\Sigma_T} \right)\right],
\end{eqnarray*}
where
\begin{equation}\label{const}
C(n)=N2^{c(n)}
(r_\#)^{- (n+2)/2}
\left(\frac{2n\beta^{-\varepsilon (n+2)/2}}{4-(n+2)(\upsilon -1)
\varepsilon }\right)^{1/(2+\varepsilon)}.
\end{equation}
Here, $c(n)$ 
 stands for a positive polynomial  function of degree 1 on the space
dimension  $n$.
Therefore, from (\ref{gr2}) we conclude (\ref{cota2}).
 
\section{$W^{1,p}$ regularity ($\ell=2$ and isotropic case)}
\label{sw1p}

In this section, we reformulate the explicit $L^p$-estimate 
of the gradient of a weak solution.
The leading coefficient is assumed
to be $\mathsf{A}=a\mathsf{I}$.

Let us state the following results whose extends to the problem under study
the result obtained in \cite{ben,necas} for the Dirichlet problem.
To this end, we introduce
the Robin-Laplacian operator $\Delta^{\rm R}\in \mathcal{L}( V_p(Q_T);
 \mathcal{W}_p)$ 
and the perturbation $P:
u\in V_p(Q_T)\mapsto Pu \in \mathcal{W}_p$ defined by
\begin{eqnarray*}
\langle -\Delta^{\rm R}u,v\rangle:=
\int_{Q_T}  \nabla u\cdot\nabla v\mathrm{dx}\mathrm{dt}
+\int_{\Sigma_T} u v\mathrm{ds}\mathrm{dt};\\
\langle Pu,v\rangle:=
\int_{Q_T} (1-a) \nabla u\cdot\nabla v\mathrm{dx}\mathrm{dt}
+\int_{\Sigma_T} (1-b(u))u v\mathrm{ds}\mathrm{dt},
\end{eqnarray*}
for all $v\in V_{p'}(Q_T)$.
The term $ b(u)u v$  belongs to $ L^1(\Sigma_T)$  
due to the  embedding
 $L^2(0,T; W^{1,2}(\Omega))\hookrightarrow  L^{2}(\Sigma_T)$,
and the growthness
of $b$ implies that $b(u)\in L^{\infty}(\Sigma_T)$.

The following first result is established.
\begin{prop}\label{l-1}
If $L^{-1}:\mathcal{W}_p\rightarrow  V_p(Q_T)$ is an isomorphism, then
\[
\|L^{-1}Pu\|_{V_p(Q_T)}\leq \|L^{-1}\|_{\mathrm{op}}
\left( (1-a_\#)\|\nabla u\|_{p,Q_T}+(1-b_\#)\|u\|_{2,\Sigma_T}\right),
\]
where $\|L\|_{\rm op}$ stands for the operator norm of $L$.
\end{prop}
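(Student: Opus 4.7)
The plan is essentially to realize $L^{-1}P$ as a composition of two bounded operators and to estimate the middle factor $\|Pu\|_{\mathcal{W}_p}$ by identifying $Pu$ as a functional of the type that defines $\mathcal{W}_p$. First, since $L^{-1}\colon\mathcal{W}_p\to V_p(Q_T)$ is a bounded (iso)morphism, the defining property of the operator norm yields
\[
\|L^{-1}Pu\|_{V_p(Q_T)}\;\leq\;\|L^{-1}\|_{\mathrm{op}}\,\|Pu\|_{\mathcal{W}_p},
\]
so everything reduces to an estimate of $\|Pu\|_{\mathcal{W}_p}$ in terms of $\nabla u$ and the trace $u|_{\Sigma_T}$.

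Second, I would inspect the definition of $P$ and recognize $Pu$ as the functional associated, in the notation for $\mathcal{W}_{p,2}$, with the choice $\mathbf{f}=(1-a)\nabla u$, $f=0$, and $h=(1-b(u))u$. Then the natural $\mathcal{W}_p$-norm bound reads
\[
\|Pu\|_{\mathcal{W}_p}\;\leq\;\|(1-a)\nabla u\|_{p,Q_T}+\|(1-b(u))u\|_{2,\Sigma_T}.
\]
Using $a_\#\le a\le 1$ (so that $0\le 1-a\le 1-a_\#$ pointwise) in the volume integral, and $b_\#\le b(u)\le 1$ (so that $0\le 1-b(u)\le 1-b_\#$) on $\Sigma_T$, one factors out the $L^\infty$ bounds to obtain
\[
\|(1-a)\nabla u\|_{p,Q_T}\;\leq\;(1-a_\#)\|\nabla u\|_{p,Q_T},\qquad
\|(1-b(u))u\|_{2,\Sigma_T}\;\leq\;(1-b_\#)\|u\|_{2,\Sigma_T}.
\]
Combining these with the opening inequality yields exactly the claim.

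The main technical point, and the only place where care is needed, is to verify that $(1-a)\nabla u$ and $(1-b(u))u$ really do sit in the spaces required by the definition of $\mathcal{W}_p=\mathcal{W}_{p,2}$. For the former this is immediate from $a\in L^\infty(\Omega)$ and $\nabla u\in L^p(Q_T)$; for the latter one uses the growth bound $|b(s)|\le\gamma_1(\cdot)|s|^{\ell-2}$ with $\ell=2$, so that $b(u)\in L^\infty(\Sigma_T)$ as already noted in the excerpt just above the proposition, together with the trace embedding $L^p(0,T;W^{1,p}(\Omega))\hookrightarrow L^2(\Sigma_T)$ giving $u|_{\Sigma_T}\in L^2(\Sigma_T)$. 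Everything else is a straightforward consequence of H\"older's inequality and the two pointwise $L^\infty$ bounds on the perturbations.
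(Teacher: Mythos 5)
Your argument is correct and spells out precisely what the paper's one-line proof ("consequence of the definition of $P$ and the assumptions with $a_\#,b_\#<1$ and $\ell=2$") leaves implicit: the factoring through the operator norm of $L^{-1}$, the identification of $Pu$ as a functional in $\mathcal{W}_p$ via the choices $\mathbf{f}=(1-a)\nabla u$, $f=0$, $h=(1-b(u))u$, and the pointwise $L^\infty$ bounds. One caveat worth flagging explicitly: your pointwise estimates $0\le 1-a\le 1-a_\#$ and $0\le 1-b(u)\le 1-b_\#$ rely on the upper bounds $a\le 1$ a.e.\ in $\Omega$ and $b(u)\le 1$ a.e.\ on $\Sigma_T$ (equivalently $a^\#\le 1$ and $b^\#\le 1$). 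The paper's hypotheses (\ref{amin})--(\ref{gama1}) and (\ref{abmax}) only supply the lower bounds $a\ge a_\#$ and $b(u)\ge b_\#$; the upper normalizations are nowhere stated, though they are indispensable for the inequality in the proposition to hold (without them one can only control $|1-a|$ by $\max\{1-a_\#,\,a^\#-1\}$, not by $1-a_\#$). You were right to invoke them, but it should be noted that these are additional implicit assumptions rather than consequences of (\ref{amin})--(\ref{gama1}).
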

\begin{proof}
This property is a consequence of definition of $P$, and
the assumptions (\ref{amin})-(\ref{gama1}) with 
$a_\#,b_\#<1$ and $\ell=2$.
\end{proof}

The existence and uniqueness of weak solutions to the 
linearized variational problem
(\ref{wvf}), {\em i.e.}  $\mathsf{A}=\mathsf{I}$ and $b\equiv 1$,
guarantee that $L=\partial_t-\Delta^{\rm R}$ is an isomorphism
 from $\{w\in D(L): Lw\in \mathcal{W}_p\}$
onto $\mathcal{W}_p$, for any $1<p<\bar p$ and some $\bar p>1$,
 such that $D(L)\subset \mathcal{W}_p\subset R(L) $.
 In particular, this  restriction of $L$ to $\mathcal{W}_p$
 (called the  $\mathcal{W}_p$ realization of the operator $L$)
 satisfies
 \[
\|L^{-1}\|_{\mathrm{op}}\leq\sup_{{\bf f}\in {\bf L}^p(Q_T)\atop
\|{\bf f}\|_{p,Q_T}=1}\sup_{f\in L^p(Q_T)\atop
\|f\|_{p,Q_T}=1}\sup_{h\in L^2(\Sigma_T)\atop
\|h\|_{2,\Sigma_T}=1}\left(\mathcal{M}(1,1)+\mathcal{E}(1,1,p)
\right)|_{\ell=2}:=\Lambda_p,
\]
where $\mathcal{M}(1,1)$ and  $\mathcal{E}(1,1,p)$ are according to (\ref{maa})
and  (\ref{eaa}), respectively.

Therefore, we state the following version of \cite[Thm. 2.2, p. 272]{ben}.
\begin{prop}\label{nablap}
Under the assumptions (\ref{amin})-(\ref{gama1}) with 
$a_\#,b_\#\in ]1-1/\Lambda_p,1[$ and $\ell=2$,
 then any weak solution   $u\in V_{2}(Q_T)$  of (\ref{wvf}) 
 enjoys the following properties
\begin{enumerate}
 \item $u$ satisfies the variational problem $
\langle Lu-Pu -F,v\rangle_{
[V_{p'}(Q_T)]'\times V_{p'}(Q_T)}=0,$
 \item $u$ verifies the following estimate
 \begin{eqnarray*}
(1-\Lambda_p(1-a_\#))\|\nabla u\|_{p,Q_T}+
(1-\Lambda_p(1-b_\#))\|u\|_{2,\Sigma_T}\leq\\
\leq \Lambda_p
\left(\| {\bf f}\|_{p,Q_T}+\|f\|_{p,Q_T}+ \| h\|_{2,\Sigma_T} \right).
\end{eqnarray*}
\end{enumerate}
\end{prop}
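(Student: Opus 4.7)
The plan is to proceed in two stages matching the two assertions. First I would verify (1) by a direct algebraic check that $B = -\Delta^{\mathrm{R}} - P$ as operators from $V_p(Q_T)$ into $\mathcal{W}_p$. Indeed, adding the defining integrals of $-\Delta^{\mathrm{R}}$ and $P$ yields
\[
\langle(-\Delta^{\mathrm{R}} - P)u,v\rangle = \int_{Q_T}\bigl(1-(1-a)\bigr)\nabla u\cdot\nabla v\,\mathrm{dx}\mathrm{dt} + \int_{\Sigma_T}\bigl(1-(1-b(u))\bigr)uv\,\mathrm{ds}\mathrm{dt} = \langle Bu,v\rangle,
\]
so that (\ref{wvf}) rewrites as $\langle Lu - Pu - F,v\rangle = 0$ for every $v\in V_{p'}(Q_T)$, which is exactly assertion (1). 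The admissibility of $b(u)uv$ in the duality pairing is guaranteed by the embedding $L^2(0,T;W^{1,2}(\Omega))\hookrightarrow L^2(\Sigma_T)$ together with $b(u)\in L^\infty(\Sigma_T)$, as noted in the text.

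For (2), I would invert $L$ and write $u = L^{-1}F + L^{-1}Pu$ in $V_p(Q_T)$. Using the natural norm on $V_p(Q_T) = V_{p,2}(Q_T)$, namely $\|\nabla\,\cdot\,\|_{p,Q_T}$ together with the trace seminorm $\|\,\cdot\,\|_{2,\Sigma_T}$, Proposition \ref{l-1} applied to the second summand gives
\[
\|L^{-1}Pu\|_{V_p(Q_T)} \leq \Lambda_p\bigl((1-a_\#)\|\nabla u\|_{p,Q_T} + (1-b_\#)\|u\|_{2,\Sigma_T}\bigr),
\]
whereas applying $L^{-1}$ to $F$ and invoking the bound $\|L^{-1}\|_{\mathrm{op}}\leq\Lambda_p$ produces
\[
\|L^{-1}F\|_{V_p(Q_T)} \leq \Lambda_p\bigl(\|{\bf f}\|_{p,Q_T} + \|f\|_{p,Q_T} + \|h\|_{2,\Sigma_T}\bigr).
\]
Collecting both estimates and moving the $\nabla u$ and $u|_{\Sigma_T}$ terms to the left yields the inequality
\[
\bigl(1-\Lambda_p(1-a_\#)\bigr)\|\nabla u\|_{p,Q_T} + \bigl(1-\Lambda_p(1-b_\#)\bigr)\|u\|_{2,\Sigma_T} \leq \Lambda_p\bigl(\|{\bf f}\|_{p,Q_T} + \|f\|_{p,Q_T} + \|h\|_{2,\Sigma_T}\bigr),
\]
and the hypothesis $a_\#,b_\#\in\,]1-1/\Lambda_p,1[$ ensures that both left-hand coefficients are strictly positive, as required.

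The main obstacle is not the rearrangement above but justifying that $\Lambda_p$, defined in the paper as the supremum of $\mathcal{M}(1,1) + \mathcal{E}(1,1,p)|_{\ell=2}$ over unit-norm data, actually majorizes the operator norm $\|L^{-1}\|_{\mathrm{op}}:\mathcal{W}_p\to V_p(Q_T)$ in the sense that both the gradient contribution $\|\nabla u\|_{p,Q_T}$ (controlled by $\mathcal{M}(1,1)$ via (\ref{cota2})) and the trace contribution $\|u\|_{2,\Sigma_T}$ (controlled by $\mathcal{E}(1,1,p)$ via (\ref{gr2}) with $\ell=2$, $b_\#=1$) are bundled together under that single constant. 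Once this identification is accepted, the argument is a standard Neumann-series-type perturbation of the $\mathcal{W}_p$-realization $L$, with the smallness conditions $1-a_\#,1-b_\# < 1/\Lambda_p$ playing the role of contraction conditions on the perturbation $P$.
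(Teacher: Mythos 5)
Your proof takes essentially the same route as the paper: verify assertion (1) by the algebraic identity $B=-\Delta^{\rm R}-P$, then rewrite $u=L^{-1}(Pu+F)$, apply Proposition \ref{l-1} together with $\|L^{-1}\|_{\rm op}\leq\Lambda_p$, and absorb the perturbation terms using the hypothesis $a_\#,b_\#\in\,]1-1/\Lambda_p,1[$. Your closing caveat about the identification of $\Lambda_p$ with the operator norm through the norm $\|\nabla\cdot\|_{p,Q_T}+\|\cdot\|_{2,\Sigma_T}$ is a fair remark, but the paper's proof is equally terse on this point and the argument matches.
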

\begin{proof}
The point 1 is consequence of the definitions of the operators.
We give an outline of the proof of the point 2.
From the point 1, we have $u=L^{-1}(Pu+F)$. Then, the claimed estimate
follows from Proposition \ref{l-1}.
\end{proof}

Finally, we observe that different explicit estimates are obtained via
the interpolative approach, namely
 the Marcinkiewicz interpolation theorem \cite[pp. 228-230]{gt}.
\begin{thm}
Let $T$ be a linear mapping from $L^q(\Omega)\cap L^r(\Omega)$
into itself, $1\leq q<r<\infty$, and suppose there are constants
$T_1$ and $T_2$ such that
\[
\mu_{Tf}(t)\leq \left({T_1\|f\|_{q,\Omega}\over t}\right)^q,\qquad
\mu_{Tf}(t)\leq \left({T_2\|f\|_{r,\Omega}\over t}\right)^r,
\]
for all $f\in L^q(\Omega)\cap L^r(\Omega)$, and $t>0$.
Then, $T$ extends as a bounded linear mapping from $L^p(\Omega)$ into itself
for any $p$ such that $q<p<r$, and
\begin{equation}
\|Tf\|_{p,\Omega}\leq 2\left({p\over p-q}+{p\over r-p}\right)^{1/p}
T_1^{\alpha}T_2^{1-\alpha}
\|f\|_{p,\Omega}
\end{equation}
holds
for all $f\in L^q(\Omega)\cap L^r(\Omega)$, where $1/p=\alpha/q+(1-\alpha)/r$.
\end{thm}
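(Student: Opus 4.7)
The plan is to establish this Marcinkiewicz-type result via the classical layer-cake decomposition, expressing the $L^p$-norm through the distribution function $\mu_g(t)=|\{x\in\Omega : |g(x)|>t\}|$ as
\[
\|Tf\|_{p,\Omega}^p = p\int_0^\infty t^{p-1}\mu_{Tf}(t)\,\mathrm{dt},
\]
and, for each level $t>0$, splitting $f=f_1+f_2$ with $f_1=f\chi_{\{|f|>s\}}$ and $f_2=f\chi_{\{|f|\le s\}}$ for a threshold $s=s(t)$ to be chosen later. Linearity of $T$ gives $|Tf|\le |Tf_1|+|Tf_2|$, hence the sublevel inequality $\mu_{Tf}(t)\le \mu_{Tf_1}(t/2)+\mu_{Tf_2}(t/2)$.

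Next I would invoke the two weak-type hypotheses at level $t/2$, obtaining
\[
\mu_{Tf_1}(t/2)\le \Bigl(\frac{2T_1\|f_1\|_{q,\Omega}}{t}\Bigr)^q,\qquad \mu_{Tf_2}(t/2)\le \Bigl(\frac{2T_2\|f_2\|_{r,\Omega}}{t}\Bigr)^r,
\]
and then rewrite $\|f_1\|_{q,\Omega}^q$ and $\|f_2\|_{r,\Omega}^r$ through $\mu_f$ by the layer-cake identity. Applying Fubini to swap the order of integration between $t$ and $x$, the two contributions become spatial integrals of the type $\int_\Omega |f(x)|^q\bigl(\int_{\{t:\,s(t)<|f(x)|\}} t^{p-1-q}\,\mathrm{dt}\bigr)\mathrm{dx}$, and its counterpart with exponent $r$ on the complementary $t$-set.

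The heart of the argument is the optimal choice of $s(t)$. A linear ansatz $s(t)=\kappa t$ with $\kappa=(T_1/T_2)^{\sigma}$ for a suitable exponent $\sigma=\sigma(p,q,r)$ dictated by $1/p=\alpha/q+(1-\alpha)/r$ makes the inner $t$-integrals elementary: they collapse to $|f(x)|^p$ times the constants $1/(p-q)$ and $1/(r-p)$, while the prefactors $(2T_1)^q$ and $(2T_2)^r$ combine homogeneously into $2^p\,T_1^{p\alpha}T_2^{p(1-\alpha)}$ precisely because of the interpolation identity. Summing the two contributions and extracting the $p$-th root yields the claimed bound
\[
\|Tf\|_{p,\Omega}\le 2\Bigl(\frac{p}{p-q}+\frac{p}{r-p}\Bigr)^{1/p} T_1^{\alpha}T_2^{1-\alpha}\|f\|_{p,\Omega}.
\]

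The extension of $T$ from $L^q(\Omega)\cap L^r(\Omega)$ to all of $L^p(\Omega)$ follows by density, since simple functions lie in the intersection. The main obstacle is purely bookkeeping: one must verify that the two Beta-type integrals in $t$ converge, which uses exactly $q<p<r$, and that the single scalar $\sigma$ in the choice of $\kappa$ is forced to take the value that simultaneously yields the homogeneous product $T_1^{\alpha}T_2^{1-\alpha}$ and the symmetric numerical constant $p/(p-q)+p/(r-p)$ with no spurious factors. Sublinearity of the sublevel bound is the reason for the overall factor $2$.
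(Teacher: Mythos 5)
The paper does not prove this theorem at all: it is recalled verbatim from Gilbarg--Trudinger (cited as \cite[pp.~228--230]{gt}) as a tool for producing alternative explicit constants, and no proof is given in the paper itself. Your sketch reproduces the standard argument from that reference: layer-cake formula for $\|Tf\|_p^p$, the height-dependent truncation $f=f_1+f_2$ with $f_1=f\chi_{\{|f|>s(t)\}}$, the two weak-type bounds applied at level $t/2$, a Fubini swap, and a linear cutoff $s(t)=\kappa t$. The two inner $t$-integrals do reduce to $\kappa^{-(p-q)}|f|^p/(p-q)$ and $\kappa^{r-p}|f|^p/(r-p)$, exactly as you say, and convergence is precisely $q<p<r$.

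The one concrete slip is in the announced form of the cutoff constant. Equalizing the two prefactors
\[
(2T_1)^q\kappa^{-(p-q)}=(2T_2)^r\kappa^{r-p}
\]
forces
\[
\kappa=\Bigl(\frac{(2T_1)^q}{(2T_2)^r}\Bigr)^{1/(r-q)}
=\tfrac{1}{2}\,T_1^{\,q/(r-q)}\,T_2^{\,-r/(r-q)},
\]
which is \emph{not} of the form $(T_1/T_2)^\sigma$ unless $q=r$: the exponents on $T_1$ and $T_2$ are different. With the correct $\kappa$ the common prefactor becomes $(2T_1)^{p\alpha}(2T_2)^{p(1-\alpha)}=2^p T_1^{p\alpha}T_2^{p(1-\alpha)}$, using $p\alpha=q(r-p)/(r-q)$ and $p(1-\alpha)=r(p-q)/(r-q)$, and pulling it out of $p[1/(p-q)+1/(r-p)]$ yields exactly $2\bigl(p/(p-q)+p/(r-p)\bigr)^{1/p}T_1^\alpha T_2^{1-\alpha}$. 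Everything else in your outline, including the density argument for the extension to $L^p$, is sound.
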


\section{Steady-state $W^{1,p}$ regularity}
\label{sss}

The higher integrability of the gradient is an useful tool in order to obtain
H\"older continuity (by embedding if $p>n$).
As one knows since long (see e.g. \cite[Ch. 3]{lu} or \cite[Ch. 8]{gt}), 
H\"older continuity can be achieved directly.
When the domain is only Lipschitz,
the coefficients are discontinuous, and  the boundary conditions are mixed,
it is proved in 
\cite{hmrs}, for the most
interesting dimensions $n= 2, 3, 4$. 
For all dimensions it is (unfortunately,
rather implicitly) shown in \cite{grie1,grie2}
 by use of Sobolev-Campanato spaces (which embed for
suitable indices in corresponding H\"older spaces).

An explicit estimate is established in \cite{arxC}.
However, in there the dependence on the data has a wordy expression.
In view of this, 
such estimate is traced back to the celebrated paper by 
Gr\"oger and   Rehberg \cite{grog-reh}
 in the context of elliptic regularity theory for weak solutions
in the case of mixed boundary conditions. In this approach,
it is assumed to be known the upper bound
\begin{equation}\label{defmp}
M_q=\sup\{\|u\|_{1,q,\Omega}:\ u\in W^{1,q}_\Gamma (\Omega),\
\|(-\Delta+I)u\|_{[W^{1,q'}_\Gamma (\Omega)]'}\leq 1\},
\end{equation}
with $W^{1,q}_\Gamma (\Omega)=\{ v\in W^{1,q} (\Omega):\
v=0 \mbox{ on }\Gamma\}$.

Here, we consider the following mixed Neumann-power type problem
to  a linear elliptic equation:

\noindent (NPP) Find $u$ such that verifies, in the sense of distributions,
\begin{eqnarray}
-\nabla\cdot(   \mathsf{A}\nabla u)=
f-\nabla\cdot{\bf f}&\mbox{ in }&\Omega;\label{omega}\\
(\mathsf{A}\nabla u-{\bf f})\cdot{\bf n}=
(h-b(u)u)\chi_\Gamma &\mbox{ on }&\partial
\Omega, \label{gama}
\end{eqnarray}
where $\bf n$ is the unit outward normal to the boundary $\partial\Omega$.
Even more, 
instead of (\ref{defmp}) we set
\[
M_p=\|(-\Delta^{\rm R}_\Omega)^{-1}\|_{\rm op},\]
 where
$\Delta^{\rm R}_\Omega$ is the isomorphism from $W^{1,p}(\Omega)$ onto $[W^{1,p'}(\Omega)]'$ defined by 
\[
\langle -\Delta^{\rm R}_\Omega w,v\rangle =
\int_{\Omega}  \nabla w\cdot
\nabla v \mathrm{dx}+\int_{\Gamma}w v \mathrm{ds}.\]

 We endow the Sobolev space
$ W^{1,p}(\Omega)$ with the norm
\[
\|v\|_{1,p,\Omega}=
\|\nabla v\|_{p,\Omega}+\|v\|_{2,\Gamma}.\]

Although its existence and uniqueness of solutions to (NPP)
 are classical in appropriate subspace
of $H^1(\Omega)$, namely $V_{2,\ell}(\Omega)$, the $W^{1,p}$-regularity $(p>2)$
of the weak solution is a hardship.
Even if the leading coefficient is assumed either to be in VMO \cite{vit}
or to verify a minimal condition \cite{byun}
or if provided by the Laplacian operator, {\em i.e.} $\mathsf{A}=\mathsf{I}$
\cite{ding,wood}, the use of $H^2$-regularity is not allowed since
our right hand side does not belong to a Lebesgue space.

For reader convenience, we exhibit the explicit constant involved in 
the $W^{1,p}(\Omega)$ estimate established in
\cite{arxC}.
\begin{thm}\label{main1}
Let $\Omega$ be a $C^1$ domain,  the assumptions (\ref{amin})-(\ref{gama1})
 be fulfilled,
and  
\[
\upsilon=(8^n+1)2^{6n}
\left[ 
\left(\left( {4a^\#\over a_\#}\right)^{2}+{4+\nu_0\over a_\#} \right)^{1/2}
+1\right]^2,
\] 
where $\nu_0=\nu_0(f)$ is a positive constant if   $f\not=0$,
and $\nu_0(0)=0$ otherwise.
If ${\bf f}\in {\bf L}^{2+\varepsilon}(\Omega)$,
  $f\in L^{2+\varepsilon}(\Omega)$,
 and $h\in L^{2+\varepsilon}(\Gamma)$ for any $\varepsilon \in ]0, 1/(\upsilon-1)
[$, then there exists  a weak solution 
$u \in V_{2,\ell}(\Omega)$ to (\ref{omega})-(\ref{gama}), in the sense
\begin{eqnarray}
\int_{\Omega}    ( \mathsf{A}\nabla u)\cdot
\nabla v \mathrm{dx}+\int_{\Gamma}b(u)u v \mathrm{ds}
=\int_{\Omega}{\bf f}\cdot\nabla v \mathrm{dx}
+\nonumber\\
+\int_{\Omega}fv \mathrm{dx}
+\int_{\Gamma} h v\mathrm{ds},\quad\forall v\in V_{2,\ell}(\Omega).
\label{pbu}
\end{eqnarray}
such that belongs to $W^{1,2+\varepsilon}(\Omega)$.
In particular,
\begin{eqnarray}
\| \nabla u\|_{2+\varepsilon,\Omega}^{2+\varepsilon}
\leq \frac{2^{n(1+\varepsilon/2)}N}{4-(n+2)(\upsilon-1)\varepsilon}\left[
\left(8\over (r_\#)^n\right)^{\varepsilon/2} 4
\| \nabla u\|_{2, \Omega}^{2+\varepsilon} +\right.\nonumber\\
\left. +\left( 2^{2+3\varepsilon/2}+
 \upsilon(4+(n+2)\varepsilon)\right)
\|  {\mathcal F}\|_{2+\varepsilon, \Omega}^{2+\varepsilon}
+ \left( 4+ \upsilon(4+(n+2)\varepsilon)\right)
 \| {\mathcal H}\|_{2+\varepsilon,\Gamma}^{2+\varepsilon}\right]
,\quad\label{cotam1}
\end{eqnarray}
where 
 \begin{eqnarray*}
 {\mathcal F}&= &(a_\#)^{-1/2}\left[\left(\frac{2}{ a_\#}+2\right)|{\bf f}|^2+
\frac{1}{\nu_0} |f|^2\right]^{1/2}
;\\ {\mathcal H}&= &2
{\sqrt{2+2^{-1/n}a_\#}\over a_\#} K_{2n/(n+1)} |h|,
\end{eqnarray*}
with $r_\#>0$
and $N\in\mathbb{N}$ being dependent on the space dimension.
\end{thm}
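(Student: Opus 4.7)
My plan is to mimic the strategy used for the parabolic case (Theorem~\ref{tmain}) but in the simpler time-independent setting, where the energy flow and the Gronwall step are replaced by a single direct absorption argument. First I would establish existence of a weak solution $u\in V_{2,\ell}(\Omega)$ to (\ref{pbu}) by invoking the theory of monotone operators: under (\ref{amin})--(\ref{gama1}), the steady-state operator associated with $B$ is monotone, hemicontinuous, bounded and coercive on $V_{2,\ell}(\Omega)$, so a Browder--Minty type theorem yields existence (and uniqueness from strict monotonicity coming from $a_\#,b_\#>0$). Testing the variational equation (\ref{pbu}) with $v=u$, using (\ref{amin})--(\ref{bmin}), Hölder, and Young inequalities with parameters $\nu_0,\nu_1$, I would extract the basic energy estimate
\[
a_\#\|\nabla u\|_{2,\Omega}^2+b_\#\|u\|_{\ell,\Gamma}^\ell
\leq \mathcal{E}_{\mathrm{st}}(a_\#,b_\#),
\]
where $\mathcal{E}_{\mathrm{st}}$ is the steady-state counterpart of $\mathcal{E}(a_\#,b_\#,2)$, obtained by dropping the exponential/Gronwall factor in (\ref{eaa}).

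Next I would establish an elliptic Caccioppoli-type inequality, i.e.\ the steady-state analogue of Proposition~\ref{cacc}. Fixing $z_0=x_0\in\overline{\Omega}$ and $0<r<R$ with $R$ small, I would use as test function $\eta^2(u-U)$ where $\eta$ is a cut-off with $\eta\equiv 1$ on $Q_r(x_0)$, $\eta\equiv 0$ outside $Q_R(x_0)$ and $|\nabla\eta|\le (R-r)^{-1}$, and where $U$ is the $\eta^2$-average of $u$ if $\mathrm{supp}(\eta)\cap\Gamma=\emptyset$ and $U\equiv 0$ otherwise. The surface term with $h$ is controlled by the trace embedding $W^{1,2n/(n+1)}(\Omega)\hookrightarrow L^2(\Gamma)$ with constant $K_{2n/(n+1)}$, exactly as in the boundary step of the parabolic Caccioppoli estimate, absorbing the gradient term via $\nu_1,\nu_2$. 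This should yield an inequality of the form
\[
\|\nabla u\|_{2,Q_r(x_0)}^2 \le \frac{B_{\mathrm{st}}}{(R-r)^{2(n+2)/n}}\|\nabla u\|_{2n/(n+2),Q_R(x_0)}^2 + \text{(data terms in } \mathcal{F},\mathcal{H}\text{)},
\]
where $B_{\mathrm{st}}$ matches the constant appearing inside the bracket of the definition of $\upsilon$ in the statement (up to the $(8^n+1)2^{6n}$ prefactor), after applying the Poincaré/Sobolev inequality of Lemma~\ref{lpoin} to pass from $\|u-U\|_{2}$ to $\|\nabla u\|_{2n/(n+2)}$.

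The main step is then to apply the elliptic version of Propositions~\ref{surf} and \ref{ppint} (referenced in the excerpt as being in \cite{arxC}). Setting $\Phi=|\nabla u|^{2n/(n+2)}$, $p=(n+2)/n$, $m_1=m_2=r=l_1=l_2=s=2$, the Caccioppoli inequality becomes precisely the reverse Hölder inequality with an additional surface term and with $\alpha=r/R\in[1/2,1[$. The Gehring-type proposition then produces a self-improvement: $\Phi\in L^{p+\varepsilon}_{\mathrm{loc}}$ for any $\varepsilon\in[0,\delta]\cap[0,(p-1)/(\upsilon-1)[$, which translates to $\nabla u\in L^{2+\varepsilon}$ and delivers the local estimates on balls centered at interior and boundary points. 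The explicit form of $\upsilon$ is forced by (\ref{defuup}) with $n/(n+2)\cdot p=1$ and the numerical values $B_{\mathrm{I}}=B_{\mathrm{st}}$; the factor $(8^n+1)2^{6n}$ in the statement absorbs the $(4^n+1)$ and the $2^{2(n+1)}$, $2^{2n+3}$ etc.\ after the obvious bookkeeping.

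Finally, I would globalize via a covering argument analogous to Section~\ref{scm}: by the Besicovitch covering theorem, $\overline{\Omega}$ is covered by a finite family of cubes $\{Q_{r_m/2}(x^{(m)})\}_{m=1}^N$ with controlled overlap $2^n+1$, split into interior and boundary cubes (on the latter I would use the $C^1$ flattening of Definition~\ref{cka}, which is volume-preserving so that norms transform cleanly). Summing the local $L^{2+\varepsilon}$ bounds over this finite cover and inserting the $L^2$-estimate $\|\nabla u\|_{2,\Omega}^2\le\mathcal{E}_{\mathrm{st}}/a_\#$ produces (\ref{cotam1}) with the prefactor $N/(r_\#)^n$ built into the constant. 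The main obstacle, as in the parabolic case, is not any single analytic step but the explicit tracking of numerical constants: balancing the absorption parameters $\nu_1,\nu_2$ in the Caccioppoli step so that the resulting $B_{\mathrm{st}}$ matches exactly the quantity appearing inside the bracket of the theorem's $\upsilon$, and correctly grouping the data terms into $\mathcal{F}$ and $\mathcal{H}$ so that (\ref{cotam1}) comes out with the stated coefficients $2^{2+3\varepsilon/2}+\upsilon(4+(n+2)\varepsilon)$ and $4+\upsilon(4+(n+2)\varepsilon)$ on the two data groups.
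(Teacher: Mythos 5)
The paper does not actually prove Theorem~\ref{main1} here: immediately before the statement it says, ``For reader convenience, we exhibit the explicit constant involved in the $W^{1,p}(\Omega)$ estimate established in \cite{arxC}.'' The theorem is quoted verbatim from that reference, so there is no in-paper proof to compare your argument against. That said, your roadmap --- existence via Browder--Minty monotone operator theory, a global energy estimate by testing with $v=u$, an elliptic Caccioppoli inequality with the $\eta^2$-average of $u$ subtracted in the interior and the trace constant $K_{2n/(n+1)}$ handling the boundary term, an elliptic Gehring/Giaquinta--Modica self-improvement, and finally a Besicovitch covering --- is the natural steady-state mirror of the paper's Sections~3--5, and is presumably what \cite{arxC} does. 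So the plan is sensible.

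However, you have a concrete gap in the one place the theorem actually demands precision. You claim that ``the explicit form of $\upsilon$ is forced by (\ref{defuup}) with $n/(n+2)\cdot p=1$'' and that the prefactor $(8^n+1)2^{6n}$ ``absorbs the $(4^n+1)$ and the $2^{2(n+1)}$, $2^{2n+3}$ etc.\ after the obvious bookkeeping.'' But the parabolic template (\ref{defuup}) has the structure $\upsilon=(4^n+1)\bigl(\cdots\bigr)^{p}$ with the Gehring exponent $p=(n+2)/n$, while the $\upsilon$ of Theorem~\ref{main1} has a \emph{squared} bracket $\bigl[\cdots+1\bigr]^2$, not a $p$-th power, and the spatial Vitali constant in $\mathbb{R}^n$ would naturally give a factor of the form $(\sigma^n+1)$ with $\sigma\in{]3,4[}$, i.e.\ $(4^n+1)$, not $(8^n+1)$. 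The elliptic Poincar\'e--Sobolev step with $\Phi=|\nabla u|^{2n/(n+2)}$ that you propose would still deliver $p=(n+2)/n$, so the exponent $2$ on the bracket and the $(8^n+1)2^{6n}$ prefactor cannot come out of (\ref{defuup}) by ``obvious bookkeeping'' --- they indicate a structurally different normalization or exponent choice in \cite{arxC}. Similarly, you do not derive the coefficients $2^{2+3\varepsilon/2}+\upsilon(4+(n+2)\varepsilon)$ and $4+\upsilon(4+(n+2)\varepsilon)$ in (\ref{cotam1}), nor the specific forms of $\mathcal{F}$ and $\mathcal{H}$ with the factor $\sqrt{2+2^{-1/n}a_\#}$; you defer all of this to ``bookkeeping,'' but since the whole point of the theorem is the explicit constant, that deferral is exactly the missing content. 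The qualitative higher integrability $\nabla u\in L^{2+\varepsilon}$ would indeed follow from your outline, but the stated quantitative bound is not established by it.
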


Let us extend the existence result for the
mixed Dirichlet-Neumann problem  \cite[Theorem 1]{grog-reh} to the following 
one for the mixed Neumann-power type problem (NPP).
\begin{prop}\label{ppv}
Suppose $\ell=2$. 
If  $\mathsf{A}$ is symmetric,  ${\bf f}\in {\bf L}^p(\Omega)$,
$f\in L^{pn/(p+n)}(\Omega)$, and $h\in L^2(\Gamma)$, with  $p>2$ such that
\begin{equation}\label{defvk}
a^\#/M_p>\varkappa:=
\max\{\sqrt{(a^\#)^2-(a_\#)^2},
| a^\#-a_\#b_\#/a^\#|\},
\end{equation} 
then the weak solution $u\in V_{2,\ell}(\Omega)$ of (\ref{pbu}) satisfies
\begin{equation}\label{el2}
\|\nabla u\|_{p,\Omega}+\|u\|_{2,\Gamma}\leq {M_p a_\#\over a^\#-\varkappa
M_p}(
\| {\bf f}\|_{p,\Omega}+S_{p'}\|f\|_{pn/(p+n),\Omega}+
\| h\|_{2,\Gamma}),
\end{equation}
where $S_{p'}$ is according to Remark \ref{rsp}.
\end{prop}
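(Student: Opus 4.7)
The strategy is a Gröger--Rehberg perturbation of isomorphism, with $-\Delta^{\rm R}_\Omega$ playing the role of the reference operator. First, I record that a weak solution $u\in V_{2,\ell}(\Omega)=H^1(\Omega)$ of (\ref{pbu}) exists and is unique: under (\ref{amin})--(\ref{gama1}) with $\ell=2$ the steady-state analogue of the operator $B$ of Section~2 is monotone, hemicontinuous, bounded and coercive on $H^1(\Omega)$, so the Browder--Minty theorem applies exactly as in the parabolic setting.

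Next, I rewrite (\ref{pbu}) by inserting and subtracting the Robin-Laplacian on the left-hand side: for every $v\in W^{1,p'}(\Omega)$,
\[
a^\#\langle -\Delta^{\rm R}_\Omega u,v\rangle=\langle F,v\rangle+\langle Pu,v\rangle,
\]
where $\langle F,v\rangle=\int_\Omega{\bf f}\cdot\nabla v+\int_\Omega fv+\int_\Gamma hv$ and, with a reference boundary coefficient $c_0>0$ to be chosen,
\[
\langle Pw,v\rangle=\int_\Omega (a^\#\mathsf{I}-\mathsf{A})\nabla w\cdot\nabla v\,\mathrm{dx}+\int_\Gamma (c_0-b(w))\,wv\,\mathrm{ds}.
\]
Because $-\Delta^{\rm R}_\Omega:W^{1,p}(\Omega)\to[W^{1,p'}(\Omega)]'$ is an isomorphism with $\|(-\Delta^{\rm R}_\Omega)^{-1}\|_{\rm op}=M_p$, this is equivalent to the fixed-point equation $u=Tu$ with $Tw:=\frac{1}{a^\#}(-\Delta^{\rm R}_\Omega)^{-1}(F+Pw)\in W^{1,p}(\Omega)$.

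The core of the proof is showing $T$ is a strict contraction on $W^{1,p}(\Omega)$ endowed with $\|v\|_{1,p,\Omega}=\|\nabla v\|_{p,\Omega}+\|v\|_{2,\Gamma}$. For the interior piece, the symmetry of $\mathsf{A}$ combined with (\ref{amin}) and (\ref{abmax}) yields the pointwise identity
\[
|(\lambda\mathsf{I}-\mathsf{A})\xi|^2=|\mathsf{A}\xi|^2-2\lambda\,\mathsf{A}\xi\cdot\xi+\lambda^2|\xi|^2\le\bigl((a^\#)^2+\lambda^2-2\lambda a_\#\bigr)|\xi|^2,
\]
and the choice $\lambda=a_\#$ produces the bound $\sqrt{(a^\#)^2-a_\#^2}\,|\xi|$. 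A parallel treatment on $\Gamma$, with the asymmetric reference $c_0=a_\# b_\#/a^\#$ and exploiting $b_\#\le b(\cdot,s)\le b^\#$ together with monotonicity of $b$ to secure Lipschitz control of $w\mapsto b(w)w$, gives the factor $|a^\#-a_\# b_\#/a^\#|$. Assembling these through Hölder in $L^p(\Omega)$ and the trace pairing in $L^2(\Gamma)$ yields
\[
\|Pw_1-Pw_2\|_{[W^{1,p'}(\Omega)]'}\le \varkappa\,\|w_1-w_2\|_{1,p,\Omega},
\]
so $\|Tw_1-Tw_2\|_{1,p,\Omega}\le (\varkappa M_p/a^\#)\|w_1-w_2\|_{1,p,\Omega}$. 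Under (\ref{defvk}), Banach's fixed-point theorem supplies $\tilde u\in W^{1,p}(\Omega)$ with $\tilde u=T\tilde u$; since $\tilde u$ then solves (\ref{pbu}), uniqueness from Step~1 (valid in $H^1$, which contains $\tilde u$ for $p\ge2$) identifies $\tilde u=u$, so $u\in W^{1,p}(\Omega)$.

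The quantitative bound follows from $u=Tu$:
\[
\|u\|_{1,p,\Omega}\le \frac{M_p}{a^\#}\bigl(\|F\|_{[W^{1,p'}(\Omega)]'}+\varkappa\|u\|_{1,p,\Omega}\bigr),
\]
which, upon absorbing the last term and bounding the data via Hölder and the Sobolev embedding $W^{1,p'}(\Omega)\hookrightarrow L^{pn/(pn-n-p)}(\Omega)$ of Remark~\ref{rsp} (continuity constant $S_{p'}$) to get $\|F\|_{[W^{1,p'}(\Omega)]'}\le\|{\bf f}\|_{p,\Omega}+S_{p'}\|f\|_{pn/(p+n),\Omega}+\|h\|_{2,\Gamma}$, yields (\ref{el2}) after tracking the algebraic rearrangement. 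The main obstacle is the sharp identification of the constant $\varkappa$: the symmetric computation for $\mathsf{A}$ pins down the interior reference $\lambda=a_\#$, but the asymmetric boundary form $|a^\#-a_\# b_\#/a^\#|$ forces the tailored choice $c_0=a_\# b_\#/a^\#$ rather than a common shift, and transferring this into a genuine $L^2(\Gamma)$-Lipschitz estimate for $w\mapsto b(w)w$ is where the monotonicity of $b$ is essential.
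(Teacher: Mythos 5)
Your overall strategy (Gr\"oger--Rehberg perturbation of $-\Delta^{\rm R}_\Omega$, Banach fixed point, then identification with the $H^1$ weak solution) is the same as the paper's, and the pointwise computation $|(\lambda\mathsf{I}-\mathsf{A})\xi|^2\le((a^\#)^2+\lambda^2-2\lambda a_\#)|\xi|^2$ is the standard estimate used there. However, there are two genuine gaps.

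First, and most seriously, you try to run the contraction with the \emph{nonlinear} boundary term $w\mapsto b(w)w$ in the perturbation $P$, and you claim that ``monotonicity of $b$ is essential'' to secure Lipschitz control of this map in $L^2(\Gamma)$. Under the paper's assumptions on $b$ --- Carath\'eodory, monotone in $s$, and $b_\#\le b(\cdot,s)\le b^\#$ for $\ell=2$ --- the map $s\mapsto b(\cdot,s)s$ need not be Lipschitz: writing $b(s_1)s_1-b(s_2)s_2=b(s_1)(s_1-s_2)+(b(s_1)-b(s_2))s_2$, the first piece is fine but the second has no useful bound without a modulus of continuity on $b$, which is not assumed. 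The paper avoids this entirely by \emph{freezing} $b(u)$ at the already-known weak solution $u$ and iterating the \emph{linear} operator $w\mapsto(1-tb(u))w$: since $b(u)\in L^\infty(\Gamma)$ this is trivially bounded on $L^2(\Gamma)$, the fixed-point problem becomes affine, and the fixed point $w$ solves a linear coercive problem in $H^1$ that also has $u$ as a solution, so $w=u$ by uniqueness. That freezing step is the missing key idea.

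Second, your rewriting $a^\#\langle-\Delta^{\rm R}_\Omega u,v\rangle=\langle F,v\rangle+\langle Pu,v\rangle$ with a ``reference boundary coefficient $c_0>0$ to be chosen'' does not hold as an identity: matching the boundary integrals forces $c_0=a^\#$, so $c_0$ is not a free parameter. The freedom the paper actually exploits is a single scalar $t$ multiplying the whole equation, chosen as $t=a_\#/(a^\#)^2$; this produces the interior factor $\sqrt{1-(a_\#/a^\#)^2}=\sqrt{(a^\#)^2-a_\#^2}/a^\#$ and the boundary factor $|1-a_\#b_\#/(a^\#)^2|=|a^\#-a_\#b_\#/a^\#|/a^\#$, both in the same normalization. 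Your declared perturbation uses $a^\#\mathsf{I}-\mathsf{A}$ yet your estimate is carried out with $\lambda=a_\#$, and your wrapper divides by $a^\#$; these are not consistent and do not reproduce $\varkappa M_p/a^\#$ without the paper's choice of $t$.
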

\begin{proof} 
The monotone theory for elliptic equations (see for instance
\cite[Corollary 2.2, p. 39]{show}) ensures the existence of 
 $u \in V_{2,\ell}(\Omega)$  solving 
\begin{eqnarray}
\int_{\Omega}  \nabla u\cdot
\nabla v \mathrm{dx}+\int_{\Gamma}b(u)u v \mathrm{ds}
=\int_{\Omega}{\bf F}\cdot\nabla v \mathrm{dx}
+t\int_{\Omega}fv \mathrm{dx}+\nonumber\\
+\int_{\Gamma}Gv \mathrm{ds},
 \quad\forall v\in V_{2,\ell}(\Omega),\label{pbua}
\end{eqnarray}
with
\begin{eqnarray*}
{\bf F}&=&  ( \mathsf{I}-   t\mathsf{A})\nabla u+t{\bf f} ;\\
G&=&(1-tb(u))u+th,
\end{eqnarray*}
for any $t>0$.

We seek for a unique fixed point of the continuous linear mapping $Q:
W^{1,p}(\Omega)\rightarrow W^{1,p}(\Omega)$ defined by 
$Qw=(-\Delta^{\rm R}_\Omega)^{-1}(L_tw)$, with
\[ 
\langle L_tw,v\rangle =
\int_{\Omega}   [( \mathsf{I}-   t\mathsf{A}) \nabla w+t{\bf f} ]\cdot
\nabla v \mathrm{dx}
+t\int_{\Omega}fv \mathrm{dx}+\int_{\Gamma}[ (1-tb(u))w+th] v \mathrm{ds}, 
\] 
for all $v\in W^{1,p'}(\Omega)$.

The existence of a unique fixed point is guaranteed 
if $Q$ is strictly contractive.
Let $u_1$, $u_2\in W^{1,p}(\Omega)$ be arbitrary, then
\[
\|Qu_1-Qu_2\|_{1,p,\Omega}\leq M_p\left(\|
  ( \mathsf{I}-   t\mathsf{A})\nabla (u_1-u_2)\|_{p,\Omega}+
  \|(1-tb(u))(u_1-u_2)\|_{2,\Gamma}
  \right).
\]

For all ${\bf y}\in {\bf L}^p(\Omega)$, we have the relation \cite{grog-reh}
\begin{equation}\label{ay}
\|(\mathsf{I}-{a_\#\over (a^\#)^2}\mathsf{A}){\bf y}\|_{p,\Omega}\leq
\sqrt{1-(a_\#/a^\#)^2}\|{\bf y}\|_{p,\Omega}.
\end{equation}
Letting $t=a_\#(a^\#)^{-2}$, gathering the two above inequalities we obtain
 \begin{eqnarray*}
\|Qu_1-Qu_2\|_{1,p,\Omega}\leq M_p\left(\sqrt{1-(a_\#/a^\#)^2}\|
 \nabla (u_1-u_2)\|_{p,\Omega} +\right. \\ \left.
+ |1-{a_\#b_\#/ (a^\#)^2}| \|u_1-u_2\|_{2,\Gamma}
  \right).
\end{eqnarray*}
By (\ref{defvk}), $Q$ is a strict contraction, and then there exists
$w\in W^{1,p}(\Omega)$ such that $w=(-\Delta^{\rm R}_\Omega)^{-1}(L_tw)$.
By uniqueness of solution in $V_{2,\ell}(\Omega)$, then $w\equiv u$ 
verifies
\[
\| u\|_{1,p,\Omega}\leq {M_p \over a^\#}\left(
\varkappa \| u\|_{1,p,\Omega}+{a_\#\over a^\#}(
\| {\bf f}\|_{p,\Omega}+S_{p'}\|f\|_{pn/(p+n),\Omega}+
\| h\|_{2,\Gamma})
\right),
\]
since $p'<2$,
which implies (\ref{el2}).
\end{proof}

\begin{rem}
The choice of the involved constant in (\ref{defvk}), which comes from
(\ref{ay}), is not optimal. In the work
\cite{necas} the author shows that if there exists $\theta\in [0,1[$
such that $\mathsf{A}$ verifies 
\[\sum_{i,j=1}^n
{1\over 2}(A_{ij}-A_{ji})\xi_i\eta_j\leq a_\#\theta
\left(\sum_{i=1}^n\xi_i^2\right)^{1/2}
\left(\sum_{j=1}^n\eta_j^2\right)^{1/2}
\]
then
\[
\|(\mathsf{I}-{1\over a^\#}\mathsf{A}){\bf y}\|_{p,\Omega}\leq
c(1-(1-\theta) a_\#/a^\#)\|{\bf y}\|_{p,\Omega},\quad\forall 
{\bf y}\in {\bf L}^p(\Omega),
\]
where $c>1$ is dependent on $p$ $(p\geq 2)$
and the space dimension $n$ as follows
\[
\left(\sum_{i=1}^n |y_i|^2\right)^{p/2}\leq c^p\sum_{i=1}^n |y_i|^p
.\]
In particular, $c=2^{1/2-1/p}$ if $n=2$.
 For a symmetric $\mathsf{A}$, we emphasize that $\theta=0$, and
\[ 
c(1-a_\#/a^\#)\leq \sqrt{1-(a_\#/a^\#)^2}\quad\mbox{if } a_\#/a^\#
\geq {c^2-1\over c^2+1}.
\] 
Moreover, if instead (\ref{defvk}) we suppose
\[
M_p\max\{c(1-a_\#/a^\#),|1-a_\#b_\#/(a^\#)^2|\}
\leq 1-a_\#/(2a^\#),
\]
then (\ref{el2}) reads
\[\|\nabla u\|_{p,\Omega}+\|u\|_{2,\Gamma}\leq \frac{2M_p}{a_\#}(
\| {\bf f}\|_{p,\Omega}+S_{p'}\|f\|_{pn/(p+n),\Omega}+
\| h\|_{2,\Gamma}).
\]
\end{rem}

 \begin{rem}
 We emphasize that Proposition \ref{ppv} does not contradicts the counterexample
 of the existence of a function $u\in H^1(\mathbb{R}^2)$ solving
 the elliptic equation $\nabla\cdot(   \mathsf{A}
\nabla u)=0$ in $\mathbb{R}^2$ such that does not belong to $W^{1,p}(\Omega)$
for some $p>2$.
\end{rem}

\end{document}